\documentclass[12pt]{elsarticle}

\usepackage{latexsym}
\usepackage{amssymb,amsthm,amsmath}
\usepackage{graphicx}
\usepackage{makeidx}
\usepackage{comment}
\usepackage{color}
\usepackage{lineno}
\usepackage{url}

\newcommand{\ff}[1]{{\mathbb F}_{#1}}

\newcommand{\ffx}[1]{\ff{#1}[x]}

\newtheorem{theorem}{Theorem}[section]
\newtheorem{proposition}[theorem]{Proposition}
\newtheorem{lemma}[theorem]{Lemma}
\newtheorem{corollary}[theorem]{Corollary}

   {\theoremstyle{definition}
\newtheorem{definition}[theorem]{Definition}

\newtheorem{example}[theorem]{Example}
}
   \theoremstyle{remark}
\newtheorem{remark}[theorem]{Remark}

\newtheorem{open}[theorem]{Open Problem}

\journal{Journal Name}

\begin{document}

\begin{frontmatter}


\title{On construction and (non)existence of $c$-(almost) perfect nonlinear functions}



\author[1]{Daniele Bartoli}
\ead{daniele.bartoli@unipg.it}
\address[1]{Department of Mathematics and Computer Sciences, University of Perugia, Italy}
\author[2]{Marco Calderini}
\ead{marco.calderini@uib.no}
\address[2]{Department of Informatics, University of Bergen, Norway}


\begin{abstract}
Functions with low differential uniformity have relevant applications in cryptography. Recently, functions with low  $c$-differential uniformity attracted lots of attention. In particular, so-called APcN and PcN functions (generalization of APN and PN functions) have been investigated. Here, we provide a characterization of such functions via quadratic polynomials as well as non-existence results.
\end{abstract}

\begin{keyword}
c-differential uniformity \sep perfect nonlinear \sep almost perfect nonlinear \sep exceptional APcN

\MSC 11T06 \sep 06E30 \sep 94A60.
\end{keyword}

\end{frontmatter}


\section{Introduction}
{\em Perfect nonlinear} (PN) and {\em almost perfect nonlinear} (APN) functions and in general functions with low differential uniformity over finite fields have been widely investigated due to their applications in cryptography. Indeed, differential cryptanalysis \cite{BihamShamir,BihamShamir2} is an important cryptanalytic approach targeting symmetric-key primitives. In order to be resistant against such types of attacks, cryptographic functions used in the substitution box (S-box)  in the cipher are required to have a differential uniformity as low as possible (see \cite{Carlet1} for a survey on differential uniformity of vectorial Boolean functions). In \cite{Borisov}, the authors introduce a different type of differential, useful for ciphers that utilize modular multiplication as a primitive operation.  Consequently, a new concept called multiplicative differential (and the corresponding $c$-differential uniformity) has been introduced \cite{EFRST2020}. 

\begin{definition}\cite[Definition 1]{EFRST2020}
Given a $p$-ary $(n,m)$-function $f : \mathbb{F}_{p^n} \to  \mathbb{F}_{p^m}$, and $c\in  \mathbb{F}_{p^m}$, the (multiplicative) $c$-derivative of $f$ with respect to $a\in \mathbb{F}_{p^n}$ is the function
$$_cD_af(x)=f(x+a)-cf(x), \quad \ \forall x \in \mathbb{F}_{p^n}.$$
\end{definition}

For an $(n,n)$-function $f$, and $a,b\in \mathbb{F}_{p^n}$,  {let} 
$$_c\Delta_f(a,b) := {|}\{x \in \mathbb{F}_{p^n} \ : \ f(x+a)-cf(x)=b\}{|},$$
and 
$$_c\Delta_f := \max \{_c\Delta_f(a,b) \ :\ a,b \in \mathbb{F}_{p^n}, (a,c)\neq (0,1)\},$$
{where $|S|$ is the cardinality of the set $S$.} 
The quantity $_c\Delta_f$ is called $c$-differential uniformity of $f$. Note that for $c=1$, the above definitions coincide with the usual derivative of $f$ and its differential uniformity. 

If $_c\Delta_f\leq \delta \in \mathbb{N}$, we say that $f$ is  differentially $(c,\delta)$-uniform. In the special cases $\delta=1$ and $\delta=2$, such functions are also called PcN and APcN functions. It is worth noting that PcN functions (namely $\beta$-planar functions) have been investigated and partially classified in \cite{BT2019}. 

Clearly, the case $c=1$ (APN and PN functions) has been widely investigated in the literature; see \cite{BCCCV20,BCL08,Dobbertin,Dobbertin2,Dobbertin3, Gold, Janwa,Kasami,Nyberg} and \cite{CM1997,DO1968,DY2006,Dobbertin4,Helleseth1,Helleseth2,L2012,ZW2011} for known APN and PN functions. PN functions are also called \emph{planar}. APN and PN functions are of central interest in design theory, coding theory, and cryptography.

Very recently, power functions with low $c$-differential uniformity, and the $c$-differential uniformity of some known APN functions in odd characteristic have been studied in \cite{RS20}. Also in \cite{HPRS20}, the authors focus on monomial functions and study their $c$-differential uniformity for $c=-1$.

In this paper, we further investigate the construction and existence of some APcN and PcN functions. First, in Section \ref{sec:pre}, we collect some preliminary results and definitions that we will use in the rest of the paper. In Section \ref{sec:con}, we first give a characterization of APcN and PcN quadratic functions, which, in particular, gives us a correspondence between planar DO polynomials and APcN maps. Then, we show that, using the AGW criterion \cite{AGW} and its generalization \cite{Mesn}, it is possible to construct several classes of APcN and PcN functions. 
In the last section, we give some nonexistence results for some exceptional monomial APcN and PcN functions using connections with algebraic curves and Galois theory tools.


\section{Preliminaries}\label{sec:pre}

Let $q=p^n$ be a fixed prime power. We denote by $\mathbb{F}_q$ and $\overline{\mathbb{F}_q}$ the field with $q$ elements and its algebraic closure. { The multiplicative group of $\mathbb{F}_q$ will be denoted by $\mathbb{F}_q^*=\mathbb{F}_q\setminus\{0\}$.}  In the following we will focus on functions defined from $\ff{q}$ to itself, i.e. $p$-ary $(n,n)$-functions.
Any function $f:\ff{q}\to\ff{q}$ can be
represented uniquely by an element of the polynomial ring $\ffx{q}$ of degree
less than $q$.  

For $f\in\ffx{q}$:
\begin{itemize}
\item $f$ is {\em linear} if $F(x)=\sum_i a_i x^{p^i}$ (also known as linearised polynomials). 

\item $f$ is {\em affine} if it differs from a linear polynomial by a
constant. 
\item $f$ is a {\em Dembowski-Ostrom (DO)} polynomial if
$F(x)=2\sum_{0\le i\le j<n} a_{ij} x^{p^i+p^j}$, with $i<j$ if $p=2$.

\item $f$ is {\em quadratic} if it differs from a DO polynomial by an affine
polynomial. 
\end{itemize}
{The \emph{trace function} from $\mathbb{F}_{q^n}$ to $\mathbb{F}_q$ is given by the linear polynomial
$$
Tr_q^{q^n}(x)=\sum_{i=0}^{n-1}x^{q^i}.
$$}
A polynomial $f$ is a {\em permutation polynomial (PP)} over $\ff{q}$, if
$x\mapsto f(x)$ is a bijection from $\ff{q}$ to itself, and it is a {\em complete permutation polynomial (CPP)} over $\ff{q}$, if both $f(x)$ and
$f(x)+x$ are PPs.


The AGW criterion, introduced in \cite{AGW}, is a useful method in the construction of PPs and CPPs; see for instance \cite{LWWZ2014,XFZ2019,YD2011,YD2014}.
The AGW criterion, in the additive case, is given by the following proposition.

\begin{proposition}[Proposition 5.4 \cite{AGW}]\label{prop:agwc}
Let $p$ be a prime and $q=p^m$ for some integer $m>0$. Let $\phi(x)$ and $\psi(x)$ be two $\mathbb{F}_q$-linear polynomials over $\mathbb{F}_q$ seen as endomorphisms of $\mathbb{F}_{q^n}$, and let $g\in\mathbb{F}_{q^n}[x]$ and  $h\in\mathbb{F}_{q^n}[x]$ such that $h(\psi(\mathbb{F}_{q^n}))\subseteq \mathbb{F}_q^*$.Then
$$f(x)=h\circ \psi(x) \phi(x)+g\circ\psi(x)$$
is a permutation polynomial of $\mathbb{F}_{q^n}$ if and only if the following two conditions hold:
\begin{itemize}
\item[(i)] $\ker(\phi)\cap\ker(\psi)=\{0\}$;
\item[(ii)] $h(x)\phi(x)+\psi(g(x))$ permutes $\psi(\mathbb{F}_{q^n})$.
\end{itemize}
\end{proposition}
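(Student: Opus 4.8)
The plan is to realize this as an instance of the general AGW commutative-diagram principle, using $\psi$ itself as the projection onto the smaller set $S := \psi(\mathbb{F}_{q^n})$. The first step is a preliminary observation that I will use repeatedly: since $\phi$ and $\psi$ are $q$-linearized polynomials with coefficients in $\mathbb{F}_q$, they lie in the commutative ring of $q$-polynomials over $\mathbb{F}_q$ (isomorphic to $\mathbb{F}_q[T]$ via $x^{q^i}\mapsto T^i$), so that $\phi\circ\psi=\psi\circ\phi$ as maps on $\mathbb{F}_{q^n}$. In particular $\phi(S)=\phi(\psi(\mathbb{F}_{q^n}))=\psi(\phi(\mathbb{F}_{q^n}))\subseteq S$, and since $S$ is an $\mathbb{F}_q$-subspace and $h$ takes values in $\mathbb{F}_q$, the map $\bar f(x):=h(x)\phi(x)+\psi(g(x))$ sends $S$ into $S$ and is thus a well-defined self-map of the finite set $S$.

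Next I would verify the commutative square $\psi\circ f=\bar f\circ\psi$. Expanding $\psi(f(x))$ and using that $\psi$ is $\mathbb{F}_q$-linear together with $h(\psi(x))\in\mathbb{F}_q$, one gets $\psi(f(x))=h(\psi(x))\,\psi(\phi(x))+\psi(g(\psi(x)))$, while $\bar f(\psi(x))=h(\psi(x))\,\phi(\psi(x))+\psi(g(\psi(x)))$; the two agree precisely because $\phi$ and $\psi$ commute. With this square in hand, the standard counting argument applies: because $\psi$ is surjective onto $S$ and $S$ is finite, $f$ permutes $\mathbb{F}_{q^n}$ if and only if (a) the induced map $\bar f$ permutes $S$, and (b) $f$ is injective on each fiber $\psi^{-1}(s)$. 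Indeed, if $f$ is a bijection then $\bar f\circ\psi=\psi\circ f$ is surjective, forcing $\bar f$ surjective hence bijective on the finite set $S$, and $f$ is trivially injective on fibers; conversely, if $f(x_1)=f(x_2)$ then $\bar f(\psi(x_1))=\bar f(\psi(x_2))$, so (a) gives $\psi(x_1)=\psi(x_2)$ and (b) then forces $x_1=x_2$.

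It remains to identify conditions (a) and (b) with (ii) and (i). Condition (a) is literally statement (ii). For (b), I fix $s\in S$ and note that on the fiber $\psi^{-1}(s)=x_s+\ker(\psi)$ the scalar $h(\psi(x))=h(s)$ and the shift $g(\psi(x))=g(s)$ are constant, so $f$ restricted to this fiber is $u\mapsto h(s)\phi(x_s+u)+g(s)=h(s)\phi(u)+\text{const}$. This is injective if and only if $u\mapsto h(s)\phi(u)$ is injective on $\ker(\psi)$; since $h(s)\in\mathbb{F}_q\setminus\{0\}$ is invertible, this holds if and only if $\phi$ is injective on $\ker(\psi)$, i.e. $\ker(\phi)\cap\ker(\psi)=\{0\}$ — a condition independent of $s$ and exactly (i). Combining, $f$ is a permutation polynomial if and only if (i) and (ii) hold.

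The main obstacle I anticipate is not the counting argument, which is routine, but making the two structural hypotheses do their work correctly: the commutativity $\phi\circ\psi=\psi\circ\phi$ is what simultaneously makes $\bar f$ land in $S$ and makes the diagram commute, while the condition $h(\psi(\mathbb{F}_{q^n}))\subseteq\mathbb{F}_q\setminus\{0\}$ is exactly what lets me cancel the nonzero scalar $h(s)$ in the fiber analysis and reduce injectivity-on-fibers to the clean kernel condition (i). I would be careful to confirm that this cancellation is legitimate (that is, $h(s)\neq 0$ for every $s\in S$) and that the resulting fiber condition genuinely does not depend on the chosen $s$.
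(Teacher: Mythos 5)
Your proof is correct. Note that the paper itself gives no proof of this proposition --- it is quoted verbatim from the cited reference [AGW] as a preliminary tool --- and your argument is precisely the standard AGW commutative-diagram proof used there: the commutation $\phi\circ\psi=\psi\circ\phi$ of $q$-polynomials with coefficients in $\mathbb{F}_q$ makes the square $\psi\circ f=\bar f\circ\psi$ commute, the finite-set counting lemma reduces bijectivity of $f$ to bijectivity of $\bar f$ on $\psi(\mathbb{F}_{q^n})$ plus injectivity on fibers, and the nonvanishing of $h$ on $\psi(\mathbb{F}_{q^n})$ turns the fiber condition into $\ker(\phi)\cap\ker(\psi)=\{0\}$.
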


As immediate consequence, in Theorem~5.10 in \cite{AGW} the authors provided the following general framework of PPs.
\begin{theorem}[\cite{AGW}]\label{prop:agw}
Let $p$ be a prime and $q=p^m$ for some integer $m>0$. Let $\phi(x)$ be an $\mathbb{F}_q$-linear polynomial over $\mathbb{F}_q$ seen as endomorphism of $\mathbb{F}_{q^n}$, and let $g\in\mathbb{F}_{q^n}[x]$ and  $h\in\mathbb{F}_{q^n}[x]$ such that $h(x^q-x)\subseteq \mathbb{F}_q^*$.Then
$$f_1(x)=h(x^q-x)\phi(x)+Tr_q^{q^n}(g(x^q-x))$$ and $$f_2(x)=h(x^q-x)\phi(x)+g(x^q-x)^{(q^n-1)/(q-1)}$$
are permutation polynomials of $\mathbb{F}_{q^n}$ if and only if $\ker(\phi)\cap \mathbb{F}_q=\{0\}$ and $h(x)\phi(x)$ permutes $J=\{x^q-x: x \in \mathbb{F}_{q^n} \}$.
\end{theorem}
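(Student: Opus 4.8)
The plan is to obtain the statement as a direct specialisation of the AGW criterion (Proposition~\ref{prop:agwc}), the point being to choose $\psi$ so that $\psi(\mathbb{F}_{q^n})=J$ and to recognise that the two extra summands defining $f_1$ and $f_2$ take values in $\ker\psi$.

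First I would put $\psi(x)=x^q-x$. This is an $\mathbb{F}_q$-linear polynomial over $\mathbb{F}_q$: additivity is the Frobenius identity, and $\psi(\lambda x)=\lambda\psi(x)$ for $\lambda\in\mathbb{F}_q$ because $\lambda^q=\lambda$. Its kernel is $\{x:x^q=x\}=\mathbb{F}_q$, hence a one-dimensional $\mathbb{F}_q$-subspace, so its image has $\mathbb{F}_q$-codimension one; since $\mathrm{Tr}_q^{q^n}(x^q-x)=0$ for all $x$, that image is exactly $J=\{x^q-x:x\in\mathbb{F}_{q^n}\}=\ker \mathrm{Tr}_q^{q^n}$. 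With this choice the hypothesis $h(x^q-x)\subseteq\mathbb{F}_q\setminus\{0\}$ is literally $h(\psi(\mathbb{F}_{q^n}))\subseteq\mathbb{F}_q\setminus\{0\}$, and condition~(i) of Proposition~\ref{prop:agwc}, namely $\ker(\phi)\cap\ker(\psi)=\{0\}$, becomes $\ker(\phi)\cap\mathbb{F}_q=\{0\}$, as in the statement.

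Next I would rewrite both $f_1$ and $f_2$ in the AGW shape $h(\psi(x))\phi(x)+\tilde g(\psi(x))$. For $f_1$ take $\tilde g_1=\mathrm{Tr}_q^{q^n}\circ g$; for $f_2$ note that $(q^n-1)/(q-1)=1+q+\cdots+q^{n-1}$, so $z\mapsto z^{(q^n-1)/(q-1)}$ is the relative norm $N\colon\mathbb{F}_{q^n}\to\mathbb{F}_q$, and take $\tilde g_2=N\circ g$. Both $\tilde g_1$ and $\tilde g_2$ are polynomials over $\mathbb{F}_{q^n}$, and---this is the crucial point---both take values in $\mathbb{F}_q=\ker\psi$, because the trace and the norm land in $\mathbb{F}_q$. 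Consequently $\psi(\tilde g_i(y))=0$ for every $y$.

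Applying Proposition~\ref{prop:agwc} to $f_i$ with $g$ replaced by $\tilde g_i$, condition~(ii) reads ``$h(y)\phi(y)+\psi(\tilde g_i(y))$ permutes $\psi(\mathbb{F}_{q^n})=J$''; by the previous paragraph the term $\psi(\tilde g_i(y))$ vanishes, so this is exactly ``$h(y)\phi(y)$ permutes $J$''. (That $y\mapsto h(y)\phi(y)$ is a self-map of $J$ is automatic from the commuting diagram underlying Proposition~\ref{prop:agwc}; alternatively, for $y\in J$ one has $h(y)\in\mathbb{F}_q$ and $\mathrm{Tr}_q^{q^n}(h(y)\phi(y))=h(y)\,\phi(\mathrm{Tr}_q^{q^n}(y))=0$, since $\phi$ has coefficients in $\mathbb{F}_q$ and therefore commutes with the Frobenius.) Combining, $f_i$ is a permutation polynomial of $\mathbb{F}_{q^n}$ if and only if $\ker(\phi)\cap\mathbb{F}_q=\{0\}$ and $h(x)\phi(x)$ permutes $J$, which is the claim. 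The step I expect to carry the whole argument---and which one must get right---is the identification of the $\mathrm{Tr}$- and $(q^n-1)/(q-1)$-power terms as maps into $\ker\psi$, for it is precisely this that collapses the general AGW condition~(ii) to the clean requirement that $h(x)\phi(x)$ permute $J$.
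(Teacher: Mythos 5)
Your derivation is correct and is exactly the route the paper intends: it presents this theorem as an immediate consequence of Proposition~\ref{prop:agwc} with $\psi(x)=x^q-x$, and later states explicitly that the key point is that $x^q-x$ annihilates both $Tr_q^{q^n}(g(x))$ and $g(x)^{(q^n-1)/(q-1)}$, which is precisely your observation that the trace and norm terms land in $\ker\psi=\mathbb{F}_q$ and so collapse condition~(ii) to ``$h(x)\phi(x)$ permutes $J$''. No gaps.
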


In \cite{Mesn}, Mesnager and Qu extended the AGW criterion for constructing $2$-to-$1$ map. If $q$ is even, a 2-to-1 map over $\ff{q}$ is a function such that any $b\in\ff{q}$ has either 2 or 0 preimages.
If $q$ is odd, for all but one $b\in\ff{q}$, it has either 2 or 0 preimages, and the
exception element has exactly one preimage.

For $q=2^m$, using $\phi$ a 2-to-1 map over $\mathbb{F}_q$ and that permutes $J=\{x^q+x\,:\,x\in \mathbb{F}_{q^n}\}$ it is possible to construct $2$-to-$1$ maps of same type as in 
Theorem~\ref{prop:agw}.
More specifically, we have the following result.

\begin{theorem}[Theorem~15 \cite{Mesn}]\label{prop:agw2a1}
Let $q=2^m$, $\phi(x)$ be an $\mathbb{F}_q$-linear polynomial seen as an endomorphism of $\mathbb{F}_{q^n}$. Let $g,h\in\mathbb{F}_{q^n}[x]$ be such that $h(x^q+x)\in\mathbb{F}_{q}^{*}$ for any $x\in \mathbb{F}_{q^n}$. Assume
$$f_1(x)=h(x^q+x)\phi(x)+Tr_q^{q^n}(g(x^q+x))$${ and }$$f_2(x)=h(x^q+x)\phi(x)+g(x^q+x)^{(q^n-1)/(q-1)}.$$
If $\phi$ is 2-to-1 over $\mathbb{F}_q$ and $h(x)\phi(x)$ permutes $J=\{x^q+x\,:\, x \in\mathbb{F}_{q^n}\}$, then both $f_1$ and $f_2$ are 2-to-1 over $\mathbb{F}_{q^n}$.
\end{theorem}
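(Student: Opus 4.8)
The plan is to reproduce the AGW argument behind Theorem~\ref{prop:agw} at the level of the commutative diagram it encodes, and then to replace the injectivity input used there for permutations by the prescribed $2$-to-$1$ behaviour. Write $\psi(x)=x^q+x$, an $\mathbb{F}_q$-linear endomorphism of $\mathbb{F}_{q^n}$ whose kernel is exactly $\mathbb{F}_q$; hence $\psi$ is $q$-to-$1$ onto $J=\psi(\mathbb{F}_{q^n})$, and $|J|=q^{n-1}$. First I would record the intertwining relation $\psi\circ f_i=\bar f\circ\psi$, where $\bar f(y):=h(y)\phi(y)$ is the map assumed to permute $J$. Indeed $h(x^q+x)\in\mathbb{F}_q^{*}$ is a scalar, $\psi$ is $\mathbb{F}_q$-linear, and $\phi$ commutes with the $q$-Frobenius (as its coefficients lie in $\mathbb{F}_q$), so $\psi(\phi(x))=\phi(x)^q+\phi(x)=\phi(x^q+x)=\phi(\psi(x))$; moreover $Tr_q^{q^n}(g(x^q+x))\in\mathbb{F}_q=\ker\psi$, so that term is annihilated by $\psi$. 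Combining these gives $\psi(f_1(x))=h(\psi(x))\phi(\psi(x))=\bar f(\psi(x))$.

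With the diagram in hand, fix $b\in\mathbb{F}_{q^n}$ and count $f_1^{-1}(b)$. Applying $\psi$ to $f_1(x)=b$ yields $\bar f(\psi(x))=\psi(b)\in J$; since $\bar f$ is a bijection of $J$ there is a unique $y_0\in J$ with $\bar f(y_0)=\psi(b)$, forcing $\psi(x)=y_0$. Thus every solution lies in the single fibre $\psi^{-1}(y_0)$, which I parametrise as $x=x_0+t$ with $\psi(x_0)=y_0$ and $t\in\ker\psi=\mathbb{F}_q$. On this fibre $h(\psi(x))=h(y_0)$ and $Tr_q^{q^n}(g(\psi(x)))$ are constant, and additivity of $\phi$ reduces $f_1(x)=b$ to a single linear equation $\phi(t)=c$ for some constant $c=c(b,y_0)$. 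Because $\phi$ is $2$-to-$1$ on $\mathbb{F}_q$, this equation has either $0$ or $2$ solutions $t\in\mathbb{F}_q$; hence $|f_1^{-1}(b)|\in\{0,2\}$ for every $b$, which is exactly the assertion that $f_1$ is $2$-to-$1$ over $\mathbb{F}_{q^n}$. As a consistency check, each of the $q^{n-1}$ fibres maps onto a set of size $q/2$ with disjoint images, accounting for an image of size $q^n/2$.

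For $f_2$ the only change is the auxiliary term $g(x^q+x)^{(q^n-1)/(q-1)}$. Since $(q^n-1)/(q-1)=1+q+\cdots+q^{n-1}$ is the norm exponent, this term equals $N_{\mathbb{F}_{q^n}/\mathbb{F}_q}(g(x^q+x))\in\mathbb{F}_q=\ker\psi$, so it is again killed by $\psi$ and is constant on each fibre $\psi^{-1}(y_0)$. The same two steps then apply verbatim, giving $|f_2^{-1}(b)|\in\{0,2\}$.

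I expect the only point genuinely requiring care to be the verification of the relation $\psi\circ f_i=\bar f\circ\psi$: one must confirm both that $\phi$ really commutes with $\psi$ (this is where the hypothesis on $\phi$ as an $\mathbb{F}_q$-linear polynomial enters, guaranteeing $\phi(x^q)=\phi(x)^q$) and that each auxiliary summand lands in $\ker\psi=\mathbb{F}_q$. Once the diagram commutes, the passage from the permutation case of Theorem~\ref{prop:agw} to the present $2$-to-$1$ statement is purely a matter of replacing ``$\phi$ injective on $\mathbb{F}_q$'' by ``$\phi$ $2$-to-$1$ on $\mathbb{F}_q$'' in the fibrewise count, and is otherwise routine.
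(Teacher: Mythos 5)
Your proof is correct. Note, however, that the paper does not prove this statement at all: it is imported verbatim as Theorem~15 of Mesnager--Qu \cite{Mesn}, so there is no in-paper proof to compare against. Your argument --- the commutative diagram $\psi\circ f_i=\bar f\circ\psi$ with $\psi(x)=x^q+x$, reduction to a single fibre $\psi^{-1}(y_0)$ via the permutation hypothesis on $h(y)\phi(y)$ over $J$, and the fibrewise count using the $2$-to-$1$ behaviour of $\phi$ on $\ker\psi=\mathbb{F}_q$ --- is precisely the AGW-style mechanism by which the cited source establishes the result, and all the delicate points you flag (that $\phi$, having coefficients in $\mathbb{F}_q$, commutes with $y\mapsto y^q$, and that the trace and norm summands lie in $\ker\psi$) are handled correctly.
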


In the second part of this work, Section \ref{S:1}, we deal with exceptional power APcN and PcN maps.

\begin{definition}
Let $c\in\ff{q}$ be fixed. Let $f(x) \in \ffx{q}$ be a APcN (PcN) function over $\ff{q^r}$ for infinitely many $r$. Then, $f$ is said exceptional APcN (PcN).
\end{definition}

Results on exceptional APN and PN functions can be found in \cite{Survey,State} and the references therein.


We use Galois theory tools to provide non-existence results for  APcN and PcN monomials. We recall here the Galois theoretical part of our approach which deals with totally split places. This method was successfully used also in \cite{BM2020,ferraguti2018full,micheli2019constructions,micheli2019selection}.

We will make use of the following results.

\begin{theorem}\label{thm:hemultsn}\cite[Theorem~3.9]{Helmut}
Let $r$ be a prime and $G$ be a primitive group
of degree $n = s + k$ with $k \geq 3$. If $G$ contains an element of
degree and order $s$ (i.e. an $s$-cycle), then $G$ is either alternating or symmetric.
\end{theorem}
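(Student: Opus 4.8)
The plan is to read this as a statement in the classical theory of primitive permutation groups containing a cycle, and to attack it by the method of \emph{Jordan sets} followed by a transitivity bootstrap. Write $\sigma$ for the $s$-cycle guaranteed by the hypothesis, let $\Delta\subseteq\{1,\dots,n\}$ be its support (so $|\Delta|=s$) and let $\Phi$ be its fixed set (so $|\Phi|=k\geq 3$). The first step is the observation that $\Delta$ is a Jordan set for $G$: the pointwise stabiliser $G_{(\Phi)}$ contains $\sigma$, and $\sigma$ acts as a single cycle, hence transitively, on $\Delta$. Thus $G$ is a primitive group carrying a proper Jordan set whose complement $\Phi$ has size at least $3$, which is exactly the configuration the structure theory of Jordan groups is designed to exploit.

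Next I would use primitivity to raise the transitivity degree. A primitive group admitting a proper nontrivial Jordan set already has strong transitivity properties, and the key point is that $|\Phi|\geq 3$ lets one iterate: after adjoining one point of $\Phi$ to the fixed set, the relevant stabiliser is again primitive on the remaining points and still contains $\sigma$ fixing at least two of them, so the argument can be repeated. The aim of this stage is to force $G$ to be at least $3$-transitive (and in fact as highly transitive as the size of $\Phi$ permits), so that the interplay between the support and the fixed set of $\sigma$ pins $G$ down very tightly.

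The conclusion then comes from the classification of the resulting groups. When $s$ is prime this final step is Jordan's classical theorem: one reduces to the presence of a $3$-cycle and shows by a direct commutator/connectivity argument that the $3$-cycles of a primitive group generate $A_n$, whence $A_n\leq G$ and $G$ is alternating or symmetric. For general $s$ one instead appeals to the modern classification of primitive permutation groups containing a cycle, which rests on CFSG and which shows that \emph{outside} $A_n$ and $S_n$ every primitive group containing a cycle contains one fixing at most two points; these exceptional cases are certain affine groups, subgroups of $P\Gamma L_d(q)$ in their natural action, and a short list of small groups such as $M_{11}$ and $M_{23}$. Since our cycle fixes $k\geq 3$ points, all of these degenerate families are excluded, leaving only $A_n$ and $S_n$.

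The main obstacle is precisely this last step: converting ``$G$ is highly transitive and contains a long cycle'' into the explicit identification $A_n\leq G$. The Jordan-set argument delivers transitivity cheaply and the combinatorics of the support of $\sigma$ is routine, but ruling out all primitive groups other than $A_n$ and $S_n$ requires either Jordan's delicate reduction to a $3$-cycle (in the prime-length case) or the full classification of primitive groups with a cycle, and hence CFSG, in general.
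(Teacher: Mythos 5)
The paper does not prove this statement at all: it is quoted verbatim (as Theorem~3.9, really Theorem~13.9 of Wielandt's \emph{Finite Permutation Groups}) and used as a black box, so there is no internal proof to compare yours against. Your outline is, however, a faithful roadmap of the standard classical argument behind Wielandt's theorem: identify the support of the $s$-cycle as a Jordan set, invoke Jordan's transitivity bootstrap (a primitive group with a Jordan set whose pointwise-stabilized complement has size $k$ and whose Jordan subgroup acts primitively on its support is $(k+1)$-transitive --- automatic here since a transitive group of prime degree is primitive), and then manufacture a $3$-cycle by commutating suitably overlapping conjugates of $\sigma$, finishing with the theorem that a primitive group containing a $3$-cycle contains $A_n$. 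Two remarks. First, the hypothesis ``let $r$ be a prime'' in the statement is a typo for ``$s$''; the primality of the cycle length is exactly what powers the CFSG-free argument you sketch, and your hedge via the modern classification of primitive groups containing a cycle is only needed if $s$ is allowed to be composite, which is not the situation here. Second, be aware that your proposal is an outline rather than a proof: the transitivity bootstrap (your phrasing ``the relevant stabiliser is again primitive on the remaining points'' is not quite the right induction hypothesis --- the primitivity one needs is that of the Jordan subgroup on $\Delta$, not of a point stabilizer of $G$) and the reduction to a $3$-cycle are precisely the nontrivial steps, and you name them without carrying them out. For the purposes of this paper none of this matters: the theorem is only ever applied with $s=2$, where the still more elementary fact that a primitive group containing a transposition is the full symmetric group would suffice.
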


The proof of the following result can be found  in \cite{guralnick2007exceptional}.

\begin{lemma}\label{orbits}
Let $L:K$ be a finite separable extension of function fields, let $M$ be its Galois
 closure and $G:= \textrm{Gal}(M:K)$ be its Galois group. 
 Let $P$ be a place of $K$ and $\mathcal Q$ be the set of places of $L$ lying above $P$.
Let $R$ be a place of $M$ lying above $P$. Then we have the following:
\begin{enumerate}
\item There is a natural bijection between $\mathcal Q$ and the set of orbits of $H:=\mathrm{Hom}_K(L,M)$ under the action of the decomposition group $D(R|P)=\{g\in G\,|\, g(R)=R\}$.
\item  Let $Q\in \mathcal Q$ and let $H_Q$ be the orbit of $D(R|P)$ corresponding to $Q$. Then $|H_Q|=e(Q|P)f(Q|P)$ where $e(Q|P)$ and $f(Q|P)$ are ramification index and relative degree, respectively. 

\item The orbit $H_Q$ partitions further under the action of the inertia group $I(R|P)$ into $f(Q|P)$ orbits of size $e(Q|P)$. 
\end{enumerate}
\end{lemma}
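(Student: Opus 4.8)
The plan is to reduce everything to elementary group theory on the coset space $G/U$, where $U := \mathrm{Gal}(M:L)$, exploiting that $M$ is the Galois closure of $L:K$. Since $L:K$ is separable, every $\sigma \in H = \mathrm{Hom}_K(L,M)$ extends to an automorphism lying in $G$, and post-composition $g\cdot\sigma := g\circ\sigma$ makes $H$ a transitive (left) $G$-set whose point stabilizer at the inclusion $\iota:L\hookrightarrow M$ is exactly $U$; hence $H \cong G/U$ as $G$-sets. To each $\sigma \in H$ I would attach the place $Q_\sigma$ of $L$ obtained by pulling back the valuation ring of $R$ along $\sigma$. Since $\sigma$ fixes $K$ and $R$ lies over $P$, the place $Q_\sigma$ lies over $P$, giving a map $H \to \mathcal{Q}$; this map is surjective because every place of $L$ over $P$ lies under some place of $M$ over $P$, and all such places are $G$-conjugate to $R$.

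For part (1) I would show $Q_\sigma = Q_{\sigma'}$ precisely when $\sigma,\sigma'$ lie in a single orbit of $D := D(R|P)$. If $\sigma' = d\circ\sigma$ with $d\in D$, then $Q_{\sigma'} = Q_\sigma$ at once, because $d$ fixes $R$ and hence its valuation ring. Conversely, writing $\sigma' = g\circ\sigma$ (possible by transitivity), the equality $Q_{\sigma'}=Q_\sigma$ says that $R$ and $g^{-1}(R)$ restrict to the same place of $\sigma(L)$, i.e. that they are conjugate under $\mathrm{Gal}(M:\sigma(L))$; translating this through $H \cong G/U$ is exactly the statement that $g \in D\cdot\mathrm{Stab}_G(\sigma)$, so $\sigma$ and $\sigma'$ share a $D$-orbit. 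This produces the asserted natural bijection between $\mathcal{Q}$ and the set of $D$-orbits on $H$ (equivalently, the double cosets $D\backslash G/U$).

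For parts (2) and (3) I would run orbit--stabilizer twice. Represent $Q$ by $\sigma = gU \in H_Q$; then $\mathrm{Gal}(M:\sigma(L)) = gUg^{-1}$, and one checks $D\cap gUg^{-1} = D(R\mid R\cap\sigma(L))$ and $I(R|P)\cap gUg^{-1} = I(R\mid R\cap\sigma(L))$, the decomposition and inertia groups of $R$ over the restricted place in the extension $M:\sigma(L)$. Using that for a Galois extension the decomposition group has order $ef$, and that over a finite field residue extensions are separable so the inertia group has order $e$, together with the multiplicativity of $e$ and $f$ in the tower $P \le R\cap\sigma(L)\le R$ and the fact that $\sigma$ identifies $Q$ with $R\cap\sigma(L)$ over $P$, I obtain $|H_Q| = |D|/|D\cap gUg^{-1}| = e(Q|P)f(Q|P)$ for (2), and that each $I(R|P)$-orbit inside $H_Q$ has the common size $|I(R|P)|/|I(R|P)\cap gUg^{-1}| = e(Q|P)$, forcing exactly $f(Q|P)$ such orbits for (3). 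The main obstacle I anticipate is part (1): making rigorous the passage from equality of pulled-back places to equality of double cosets, where one must track carefully how $\mathrm{Gal}(M:\sigma(L))$ acts and confirm the correspondence is independent of the chosen place $R$ above $P$. Parts (2)--(3) are then pure orbit--stabilizer bookkeeping resting on the multiplicativity of ramification indices and relative degrees.
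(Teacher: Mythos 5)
Your argument is correct. Note, however, that the paper does not prove this lemma at all: it simply states that the proof can be found in the cited reference of Guralnick, Tucker and Zieve, so there is no in-paper argument to compare against. What you have written is essentially the standard proof from that reference: identify $H=\mathrm{Hom}_K(L,M)$ with $G/U$ for $U=\mathrm{Gal}(M:L)$, match places of $L$ over $P$ with double cosets $D\backslash G/U$, and then run orbit--stabilizer for $D(R|P)$ and $I(R|P)$ using $|D(R|P)|=e(R|P)f(R|P)$, $|I(R|P)|=e(R|P)$, and multiplicativity of $e$ and $f$ in the tower $P\le R\cap\sigma(L)\le R$. Two small points deserve explicit mention if you write this up in full. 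First, the identity $|I(R|P)|=e(R|P)$ requires the residue field extensions to be separable; this holds here because the residue fields of global function fields are finite, hence perfect, but it is not automatic for arbitrary function fields, so the hypothesis should be flagged. Second, your claim that all $I(R|P)$-orbits inside $H_Q$ have a \emph{common} size silently uses that $I(R|P)$ is normal in $D(R|P)$ (so that $D(R|P)$ permutes the $I(R|P)$-orbits transitively); this is true and standard, but it is the step that forces the count of orbits to be exactly $f(Q|P)$ rather than merely on average, so it should be stated.
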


The following can also be deduced by \cite{kosters2014short}; its proof can be found in \cite{BM2020}.
\begin{theorem}\label{thm:existence_tot_split}
Let $p$ be a prime number, $m$ a positive integer, and $q=p^m$.
Let $L:F$ be a separable extension of global function fields over $\mathbb F_q$ of degree $n$,  $M$ be the Galois closure of $L:F$, and suppose that the field of constants of $M$ is $\mathbb F_q$.
There exists an explicit constant $C\in \mathbb R^+$ depending only on the genus of $M$ and the degree of $L:F$ such that if $q>C$ then $L:F$ has a totally split place.
\end{theorem}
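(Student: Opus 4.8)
The plan is to reduce the existence of a totally split place of $L:F$ to the existence of a degree-one place of $F$ that splits completely in the Galois closure $M$, and then to produce such a place by a point-counting argument resting on the Hasse--Weil bound. For the reduction, suppose $P$ is a place of $F$ whose decomposition group $D(R|P)$ in $G=\mathrm{Gal}(M:F)$ is trivial for some (equivalently, every) place $R$ of $M$ above $P$; note this is exactly the condition that $P$ be totally split in the Galois extension $M:F$, since there $|D(R|P)|=e(R|P)f(R|P)$. Then by Lemma~\ref{orbits} the action of $D(R|P)$ on $H=\mathrm{Hom}_F(L,M)$ is trivial, so $H$ decomposes into $n=[L:F]$ singleton orbits, each corresponding to a place $Q$ of $L$ with $e(Q|P)f(Q|P)=|H_Q|=1$. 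Hence $P$ is totally split in $L$, and it suffices to find a degree-one place $P$ of $F$ that is totally split in $M$.

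Next I would count such places. A degree-one place $R$ of $M$ lies above a place $P$ of $F$ with $\deg R = f(R|P)\deg P$, forcing $\deg P = f(R|P)=1$; if in addition $R$ is unramified over $F$, then $e(R|P)=1$, and since $M:F$ is Galois all places above $P$ share the same $e$ and $f$, so $P$ is totally split in $M$. Conversely, each totally split degree-one place $P$ of $F$ lies below exactly $|G|$ degree-one (automatically unramified) places of $M$. Therefore the number of totally split degree-one places of $F$ equals $\tfrac{1}{|G|}$ times the number of unramified degree-one places of $M$. The hypothesis that $\mathbb F_q$ is the full field of constants of $M$ guarantees that the associated curve is geometrically irreducible, so the Hasse--Weil bound gives $N_1(M)\ge q+1-2g_M\sqrt q$ for the number $N_1(M)$ of degree-one places of $M$, where $g_M$ is the genus of $M$.

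It remains to discard the ramified degree-one places. The number of places of $M$ ramified over $F$ is at most the number of points in the support of the different, which each contribute at least $1$ to its degree; by Riemann--Hurwitz, $\deg\mathfrak d = 2g_M-2-|G|(2g_F-2)$, so this number is bounded by a quantity $B$ depending only on $g_M$ and $|G|$ (for instance $B\le 2g_M-2+2|G|$, using $g_F\ge 0$). Thus the number of totally split degree-one places of $F$ is at least $\tfrac{1}{|G|}\bigl(q+1-2g_M\sqrt q - B\bigr)$, which is positive as soon as $q+1-2g_M\sqrt q - B>0$. Solving this quadratic inequality in $\sqrt q$ yields an explicit threshold $C$ expressed through $g_M$ and $|G|$; since $|G|=[M:F]\le n!$ is controlled by $n=[L:F]$, the constant $C$ depends only on the genus of $M$ and the degree of $L:F$, as required.

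The hard part will be making the ramification bound $B$ explicit and clean, accounting correctly for the different of $M:F$, and ensuring the Hasse--Weil estimate is applied under the stated full-constant-field hypothesis, which is precisely what prevents a spurious constant-field extension from distorting $N_1(M)$. An equivalent route would invoke an effective Chebotarev density theorem for function fields applied to the identity conjugacy class of $G$; the direct Hasse--Weil count above is essentially that argument made concrete, and this is the form in which the cited references \cite{kosters2014short,BM2020} carry it out.
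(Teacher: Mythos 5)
Your proof is correct and follows essentially the same route as the proof the paper relies on: the paper itself does not prove Theorem~\ref{thm:existence_tot_split} but defers to \cite{BM2020} and the function-field Chebotarev theorem of \cite{kosters2014short}, and those arguments are exactly your reduction (trivial decomposition group in the Galois closure $M$ forces, via Lemma~\ref{orbits}, singleton orbits of $\mathrm{Hom}_F(L,M)$ and hence total splitting in $L$) combined with a Hasse--Weil count of degree-one places of $M$, discarding the at most $2g_M-2+2|G|$ ramified ones via Riemann--Hurwitz, which applies since $M:F$ is separable. The full-constant-field hypothesis is used exactly where you use it, to make the Weil bound $N_1(M)\ge q+1-2g_M\sqrt q$ legitimate, and $|G|\le n!$ gives the dependence of $C$ on $g_M$ and $n$ only.
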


\section{A characterization of APcN and  PcN functions}\label{sec:con}

It is well-known that a DO polynomial is planar if and only if it is 2-to-1 (see \cite[Theorem 3]{CM2011}). 
The following result gives a characterization of APcN and PcN quadratic polynomials for $c\in\mathbb{F}_p\setminus\{1\}$. 

Let $f:\mathbb{F}_q\to\mathbb{F}_q$. We say that $f$ is at most 2-to-1 function if for any $b\in\mathbb{F}_q$ we have $|f^{-1}(b)|\le 2$.
\begin{theorem}\label{th:DO}
Let $p$ be a prime. Let $f$ be a quadratic polynomial over $\mathbb{F}_{p^m}$ for some integer $m$. Then, for any $c\in \mathbb{F}_{p}\setminus\{1\}$ we have the following.
\begin{itemize}
\item[(i)] $f$ is at most 2-to-1 if and only if $f$ is APcN. Moreover, if $f$ is a DO polynomial, then $f$ is APcN if and only if $f$ is planar.
\item[(ii)] $f$ is a PP if and only if $f$ is PcN.
\end{itemize}
\end{theorem}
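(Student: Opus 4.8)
The plan is to reduce the computation of ${}_c\Delta_f(a,b)$ to counting fibres of $f$ itself, exploiting the symmetric bilinear form attached to the Dembowski--Ostrom part of $f$. Write $f = Q + A$ with $Q$ a DO polynomial and $A$ affine, and set $B(x,y) := f(x+y) - f(x) - f(y) + f(0)$. A direct expansion shows that $B$ equals the $\mathbb{F}_p$-bilinear form associated with $Q$ (the affine part cancels out), so in particular $f(x+a) = f(x) + f(a) - f(0) + B(x,a)$ for all $x,a \in \mathbb{F}_{p^m}$. Feeding this into the definition of the $c$-derivative yields, for every $c \in \mathbb{F}_p \setminus\{1\}$, the identity
\[
f(x+a) - cf(x) = (1-c)f(x) + B(x,a) + f(a) - f(0).
\]

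The single decisive use of the hypothesis $c \in \mathbb{F}_p$ comes next. Since $1-c \in \mathbb{F}_p^\star$ and $B$ is $\mathbb{F}_p$-bilinear, I can absorb the scalar into the bilinear argument: setting $a' := (1-c)^{-1}a$ gives $B(x,a) = (1-c)B(x,a')$, and hence
\[
(1-c)f(x) + B(x,a) = (1-c)\big(f(x) + B(x,a')\big) = (1-c)\big(f(x+a') - f(a') + f(0)\big),
\]
where the last equality is just the displayed relation applied with $a'$ in place of $a$. Therefore the equation $f(x+a) - cf(x) = b$ is equivalent to $f(x+a') = \beta(a,b)$ for an explicit constant $\beta(a,b) \in \mathbb{F}_{p^m}$, and since $x \mapsto x+a'$ is a bijection we obtain ${}_c\Delta_f(a,b) = \#f^{-1}(\beta(a,b))$.

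Now I would read off the differential spectrum. For fixed $a$, the map $b \mapsto \beta(a,b)$ is an affine bijection of $\mathbb{F}_{p^m}$, so $\beta(a,b)$ sweeps out every element; and because $c \neq 1$, the excluded pair $(a,c)=(0,1)$ never occurs. Consequently
\[
{}_c\Delta_f = \max_{y \in \mathbb{F}_{p^m}} \#f^{-1}(y),
\]
the largest fibre size of $f$. Part (ii) is immediate: ${}_c\Delta_f \le 1$ holds precisely when $f$ is injective, i.e. a PP. The first assertion of (i) is equally immediate, since a $2$-to-$1$ map has all fibres of size at most $2$, forcing ${}_c\Delta_f \le 2$ and hence $f$ APcN.

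For the ``moreover'' I would argue in odd characteristic, as planarity forces $p$ odd. By the above, a DO polynomial $f$ is APcN exactly when every fibre has size at most $2$. Being DO, $f$ is even ($f(-x)=f(x)$), so $f^{-1}(0)$ has odd cardinality; if all fibres have size $\le 2$ this forces $f^{-1}(0)=\{0\}$ and every other fibre to have size $0$ or $2$, which is exactly the definition of a $2$-to-$1$ map, and the converse is trivial. Invoking the cited equivalence of \cite{CM2011}, that a DO polynomial is planar if and only if it is $2$-to-$1$, then identifies ``APcN'' with ``planar''. The main obstacle is not a hard estimate but the discovery of the identity above together with the observation that $\mathbb{F}_p$-bilinearity lets the factor $1-c$ be folded into $B$; once this is in hand the whole statement collapses to an elementary fibre count, and it also makes transparent why the argument needs $c \in \mathbb{F}_p$ rather than merely $c \in \mathbb{F}_{p^m}$.
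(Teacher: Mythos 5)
Your proposal is correct and follows essentially the same route as the paper: your identity $f(x+a)-cf(x)=(1-c)\bigl(f(x)+B(x,a')\bigr)+\mathrm{const}$ with $a'=(1-c)^{-1}a$ is exactly the paper's key equation (their~\eqref{eq:cder}), obtained by the same mechanism of folding $1-c$ into the argument via $\mathbb{F}_p$-(bi)linearity, and both reduce ${}_c\Delta_f$ to the maximal fibre size of $f$. The only cosmetic difference is in the converse of the ``moreover'': you use the odd cardinality of $f^{-1}(0)$ together with $f(-x)=f(x)$, while the paper bounds $|\mathrm{Im}(f)|$ from both sides — equivalent arguments, both implicitly requiring $p$ odd, which you in fact state more explicitly than the paper does.
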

\begin{proof}
{\bf (i)} Let $f$ be a quadratic polynomial, that is $f(x)=\sum_{i,j}a_{i,j}x^{p^i+p^j}+\sum_{i}b_{i}x^{p^i}$.
We can note that for any $\gamma$ we have $$f(x+\gamma)=f(x)+f(\gamma)+\sum_{i,j}a_{i,j}(x^{p^i}\gamma^{p^j}+x^{p^j}\gamma^{p^i}).$$

Let $c\in \mathbb{F}_{p}\setminus\{1\}$. Then 
\begin{equation}\label{eq:cder}
\begin{aligned}
    f(x+\gamma)-cf(x)=&(1-c)\left(f(x)+\sum_{i,j}a_{i,j}\left[x^{p^i}\left(\frac{\gamma}{1-c}\right)^{p^j}+x^{p^j}\left(\frac{\gamma}{1-c}\right)^{p^i}\right]\right.\\
    &\left.+f\left(\frac{\gamma}{1-c}\right)-f\left(\frac{\gamma}{1-c}\right)\right)+f(\gamma)\\
    =&(1-c)f\left(x+\frac{\gamma}{1-c}\right)+f(\gamma)-(1-c)f\left(\frac{\gamma}{1-c}\right).
\end{aligned}
\end{equation}
Thus,  since $f$ is at most 2-to-1 so it is $f(x+\gamma)-cf(x)$, which implies that $f$ is APcN, and vice versa.

If $f$ is a DO polynomial then $f(x)=f(-x)$. Therefore, the fact that $f$ is at most 2-to-1 would imply that $f$ is 2-to-1, and so it is planar.

\noindent {\bf (ii)} This follows directly from \eqref{eq:cder}.
\end{proof}

{\begin{corollary}
Let $p$ be a prime, and $f$ be a DO polynomial over $\mathbb{F}_{p^m}$, with $m$ a positive integer. Then, $f$ is exceptional planar if and only if $f$ is exceptional APcN for any $c\in\mathbb{F}_p\setminus\{1\}$.
\end{corollary}}

\begin{remark}
Let $q=p^h$. If in Theorem \ref{th:DO} the quadratic function $f$ is of type 
$$
f(x)=\sum_{i,j}a_{i,j}x^{q^i+q^j}+\sum_{i}b_{i}x^{p^i},
$$
then the results above can be extended to any $c\in\mathbb{F}_q\setminus\{1\}$. 
\end{remark}

Up to now, all known planar functions are DO polynomials, but the case of $x^{\frac{3^k+1}{2}}$ defined over $\ff{3^n}$ with $k$ odd and $\gcd(k,n)=1$. From Theorem~\ref{th:DO}, we have that these known planar functions are also APcN. Moreover, in \cite{RS20} it has been proved that the planar function $x^{\frac{3^k+1}{2}}$ is APcN for $c=-1$. 

The result (i) of Theorem~\ref{th:DO} cannot be extended to a general planar quadratic function. Indeed, the planarity of a function $f$ is invariant by adding a linear (affine) polynomial to $f$, while the $c$-differential uniformity is not. So, if we consider a planar DO polynomial, adding a linear function we could obtain a function which is no more 2-to-1 and thus which is no APcN.
\begin{example}
The function $x^2+x^3$ is planar over $\ff{3^2}$ but it is not APcN for any $c\ne 1$.
\end{example}

\begin{remark}
In \cite{SGGRT2020}, the authors introduced and studied c-differential bent functions. In their work, they also relaxed the definition of perfect $c$-nonlinearity excluding the case of the derivative in the zero direction. In particular, they defined PcN function any $f$ such that $f(x+\gamma)-cf(x)$ is a permutation for any $\gamma\in\mathbb{F}_{q}^*$, and \emph{strictly PcN} if in addition  $f$ is a permutation.

For $p=2$, even if we exclude the derivative in the zero direction, a PcN function has to be a PP. Indeed, let $f$ be PcN and suppose that there exist $x_1$ and $x_2=x_1+\gamma$ such that $f(x_1)=f(x_1+\gamma)$. Since $f$ is PcN,
$$
f(x+\gamma)+cf(x)=(c+1)f(x)+f(x+\gamma)+f(x)
$$
is a PP. But $$
f(x_1+\gamma)+cf(x_1)=(c+1)f(x_1)=(c+1)f(x_2)=f(x_2+\gamma)+cf(x_2),
$$
which is a contradiction.

It would be interesting to understand if this is the case also for $p>2$.
\end{remark}


\subsection{Some PcN and APcN polynomials from the AGW criterion}

In the following we will show that from the AGW criterion and its generalization  \cite{Mesn} (for the case $p=2$) we can obtain PcN and APcN functions.




Theorem~\ref{prop:agw} gives us the possibility of constructing PPs of the form
$$f_1(x)=h(x^q-x)\phi(x)+Tr_q^{q^n}(g(x^q-x))$$ and $$f_2(x)=h(x^q-x)\phi(x)+g(x^q-x)^{(q^n-1)/(q-1)},$$
where $g$ can be any polynomial over $\ff{q^n}$.
This is implied by the fact that $x^q-x$ annihilates both $Tr_q^{q^n}(g(x))$ and $g(x)^{(q^n-1)/(q-1)}$ for any $x$.  We can immediately construct some PcN polynomials.

\begin{theorem}\label{th:pcn1}
Let $f_1$ and $f_2$ be PPs as in Theorem~\ref{prop:agw} with $h\equiv b\in \mathbb{F}_q^*$. Then $f_1$ and $f_2$ are PcN for any $c\in\mathbb{F}_q\setminus\{1\}$.
\end{theorem}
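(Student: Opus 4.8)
The plan is to prove that $f_1$ and $f_2$ are PcN by showing that, for every $a\in\mathbb{F}_{q^n}$, the $c$-derivative ${}_cD_af_i(x)=f_i(x+a)-cf_i(x)$ is a permutation polynomial; since $c\neq 1$, this already accounts for every $a$ (including $a=0$, where ${}_cD_0f_i=(1-c)f_i$ is a PP because $f_i$ is). The structural observation that drives everything is that in both constructions the non-linear part $P(x):=f_i(x)-b\phi(x)$ is a function of $x^q-x$ taking values in $\mathbb{F}_q$: indeed $Tr_q^{q^n}$ maps into $\mathbb{F}_q$, and $w\mapsto w^{(q^n-1)/(q-1)}$ is the norm $N_{\mathbb{F}_{q^n}/\mathbb{F}_q}$, hence also $\mathbb{F}_q$-valued. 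So I may write $P(x)=p(x^q-x)$ with $p\colon J\to\mathbb{F}_q$, where $J=\{x^q-x:x\in\mathbb{F}_{q^n}\}$.

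First I would compute the $c$-derivative. Using that $\phi$ is $\mathbb{F}_q$-linear, that $b,c\in\mathbb{F}_q$, and that $(x+a)^q-(x+a)=(x^q-x)+(a^q-a)$, one obtains
\[
{}_cD_af_i(x)=b(1-c)\phi(x)+\big[p\big((x^q-x)+(a^q-a)\big)-c\,p(x^q-x)\big]+b\phi(a).
\]
Setting $H(u):=p(u+(a^q-a))-c\,p(u)$, the bracket equals $H(x^q-x)$ and is again $\mathbb{F}_q$-valued, being an $\mathbb{F}_q$-linear combination of $\mathbb{F}_q$-valued quantities. Thus ${}_cD_af_i(x)-b\phi(a)=b(1-c)\phi(x)+H(x^q-x)$ has exactly the AGW shape of Proposition~\ref{prop:agwc} with $\psi(x)=x^q-x$, constant $h\equiv b(1-c)\in\mathbb{F}_q^{*}$ (here $b\neq 0$ and $c\neq 1$), the same linear map $\phi$, and $g$ any polynomial interpolating $H$ on $J$.

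It then remains to verify the two AGW conditions. Since $f_i$ is a PP, Theorem~\ref{prop:agw} (an ``if and only if'') already gives $\ker(\phi)\cap\mathbb{F}_q=\{0\}$ and that $b\phi$ permutes $J$; so condition~(i), namely $\ker(\phi)\cap\ker(\psi)=\ker(\phi)\cap\mathbb{F}_q=\{0\}$, is immediate. For condition~(ii) the crucial point is the ``annihilation'' already noted in the text: because $g$ takes values in $\mathbb{F}_q$, one has $\psi(g(x))=g(x)^q-g(x)=0$ on $J$, so the induced map collapses to $x\mapsto b(1-c)\phi(x)$ on $J$. Now $J$ is an $\mathbb{F}_q$-subspace, so multiplication by $1-c\in\mathbb{F}_q^{*}$ is a linear bijection of $J$; composing it with the bijection $b\phi$ of $J$ shows that $b(1-c)\phi=(1-c)(b\phi)$ permutes $J$. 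Hence both conditions hold, ${}_cD_af_i(x)-b\phi(a)$ is a PP, and adding back the constant $b\phi(a)$ preserves this, so ${}_cD_af_i$ is a PP for every $a$, i.e.\ $f_i$ is PcN.

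The step I expect to be the main obstacle is $f_2$: the power map $(\cdot)^{(q^n-1)/(q-1)}$ is not additive, so $g(u+\alpha)^{(q^n-1)/(q-1)}-c\,g(u)^{(q^n-1)/(q-1)}$ need not be of the literal form $\tilde g(u)^{(q^n-1)/(q-1)}$ appearing in Theorem~\ref{prop:agw}, and one cannot simply re-apply that theorem verbatim. The resolution, and the point to get right, is to descend to the more flexible Proposition~\ref{prop:agwc}, where the additive part may be an arbitrary $g\circ\psi$, and to exploit that the whole non-linear part is $\mathbb{F}_q$-valued so that $\psi$ annihilates it; this is exactly what makes condition~(ii) reduce to a statement about $b(1-c)\phi$ alone, uniformly for both $f_1$ and $f_2$.
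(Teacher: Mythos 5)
Your proof is correct and follows essentially the same route as the paper: apply Proposition~\ref{prop:agwc} to the $c$-derivative with $\psi(x)=x^q-x$, observe that $\psi$ annihilates the $\mathbb{F}_q$-valued nonlinear part, and reduce to $b(1-c)\phi$ permuting $J$. You are in fact more careful than the paper on two points it glosses over --- why multiplication by $1-c$ preserves $J$, and why the non-additivity of the norm map forces one to use the general Proposition~\ref{prop:agwc} rather than re-invoking Theorem~\ref{prop:agw} for $f_2$ --- but these are refinements of the same argument, not a different one.
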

\begin{proof}
Let $c\in\mathbb{F}_q\setminus\{0,1\}$. Consider for instance the permutation $f_1$. Then, $f_1$ is PcN if and only if 
$$
f_1(x+\gamma)-cf_1(x)=b(1-c)\phi(x)+Tr_q^{q^n}(g(x^q-x+\gamma^q-\gamma))-cTr_q^{q^n}(g(x^q-x)) +b\phi(\gamma)
$$ 
is a PP for any $\gamma$. Denoting by $\psi(x)=x^q-x$, {and by $g'(x)=g(x+\gamma^q-\gamma)$}, from the AGW criterion (Proposition~\ref{prop:agwc}) we have that this is a PP if and only if
$$
b(1-c)\phi(x)+\psi(Tr_q^{q^n}(g'(x))-cTr_q^{q^n}(g(x)))
$$
permutes $J=\{x^q-x: x \in \mathbb{F}_{q^n} \}$. Now, $\psi(Tr_q^{q^n}(g(x))-cTr_q^{q^n}(g(x)))=0$ and thus $b(1-c)\phi(x)$ permutes $J$ since $f_1$ is a PP. The same holds for $f_2$.
\end{proof}

Another type of PPs, which are also PcN, can be constructed in the following way.

\begin{theorem}\label{th:quad}
Let $p$ be a prime and $q=p^m$ for some integer $m>0$. Let $g(x)\in\mathbb{F}_{q^2}[x]$ be any polynomial such that $g(J)\subseteq J$ where $J=\{x^q-x: x \in \mathbb{F}_{q^{2}} \}$ and $\phi(x)$ be an $\mathbb{F}_q$-linear polynomial over $\mathbb{F}_q$. Let {$s>0$ be an even integer.}
Then, for any $b\in\mathbb{F}_q^*$
$$
f(x)=b\phi(x)+(g(x^q-x))^s
$$
is a PP if and only if $\phi(x)$ induces a permutation over $J$.
\end{theorem}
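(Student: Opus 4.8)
The plan is to realize $f$ inside the additive AGW framework of Proposition~\ref{prop:agwc} (equivalently Theorem~\ref{prop:agw}) and to show that the nonlinear summand contributes nothing on the subspace that controls the criterion. I would set $\psi(x)=x^q-x$, so that $\psi(\mathbb{F}_{q^2})=J$ and $\ker\psi=\mathbb{F}_q$, take the role of ``$h$'' in the criterion to be the constant $b\in\mathbb{F}_q^*$, and put $G(y)=g(y)^s$, so that $f(x)=b\,\phi(x)+G(\psi(x))$ is exactly of the AGW shape $h\circ\psi(x)\,\phi(x)+G\circ\psi(x)$. Since $h(\psi(\mathbb{F}_{q^2}))=\{b\}\subseteq\mathbb{F}_q\setminus\{0\}$, the hypotheses of Proposition~\ref{prop:agwc} are satisfied, and $f$ is a permutation polynomial if and only if (i) $\ker\phi\cap\mathbb{F}_q=\{0\}$ and (ii) the map $b\,\phi(x)+\psi(G(x))$ permutes $J$.

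The heart of the argument is the evaluation of $\psi(G(x))$ on $J$, and this is the step I expect to be decisive. First I would record that $J=\{x^q-x:x\in\mathbb{F}_{q^2}\}$ coincides with the trace-zero hyperplane $\ker(\mathrm{Tr}_q^{q^2})$: indeed $\mathrm{Tr}_q^{q^2}(x^q-x)=0$ for every $x$, and both sets have cardinality $q$. Consequently every $w\in J$ satisfies $w^q=-w$. Now fix $x\in J$; by the standing hypothesis $g(J)\subseteq J$, so $g(x)\in J$ and hence $g(x)^q=-g(x)$. Writing $s=p^h+p^k$ I then compute
\[
\psi(G(x))=g(x)^{sq}-g(x)^s=\bigl(g(x)^q\bigr)^s-g(x)^s=\bigl((-1)^s-1\bigr)\,g(x)^s .
\]
The exponent $s=p^h+p^k$ is even in every characteristic — for odd $p$ it is a sum of two odd numbers, while for $p=2$ one has $-1=1$ — so $(-1)^s=1$ and therefore $\psi(G(x))=0$ for all $x\in J$. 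This vanishing is the crux: it is precisely what collapses the AGW criterion, which in general yields two coupled conditions, down to a statement about $\phi$ alone.

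With the nonlinear part annihilated on $J$, condition (ii) reduces to requiring that $x\mapsto b\,\phi(x)$ permute $J$. Since $b\in\mathbb{F}_q^*$ and $J$ is an $\mathbb{F}_q$-subspace of $\mathbb{F}_{q^2}$, scalar multiplication by $b$ is a bijection of $J$, so (ii) holds exactly when $\phi$ induces a permutation of $J$, as claimed. The companion condition (i), namely injectivity of $\phi$ on $\ker\psi=\mathbb{F}_q$, is the routine part of the AGW hypotheses and would be verified alongside (ii); note in particular that for $p=2$ the requirements that $\phi$ permute $J$ and that $\phi$ be injective on $\mathbb{F}_q$ coincide, since there $a_0+a_1=0$ and $a_0-a_1=0$ are the same condition. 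Thus the only genuine obstacle is the trace computation above; once $\psi(G(x))\equiv 0$ on $J$ is established, the stated equivalence follows directly from the AGW criterion.
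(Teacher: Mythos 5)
Your proof is correct and follows essentially the same route as the paper: reduce via the additive AGW criterion (Proposition~\ref{prop:agwc}) with $h\equiv b$, and show that $g(x)^{qs}-g(x)^s$ vanishes on $J$ because every $y\in J$ satisfies $y^q=-y$ and $s=p^h+p^k$ has $p$-weight $2$, so $(-y)^s=y^s$. The one divergence is that you explicitly retain AGW condition (i), $\ker\phi\cap\mathbb{F}_q=\{0\}$, which the paper's proof silently drops; this is to your credit, since for odd $p$ writing $\phi(x)=a_0x+a_1x^q$ shows that condition (i) reads $a_0+a_1\neq0$ while ``$\phi$ permutes $J$'' reads $a_0-a_1\neq0$, so the literal ``if and only if'' of the statement really does need (i) as an additional (unstated) hypothesis rather than something ``verified alongside'' (ii).
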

\begin{proof}
From the AGW criterion (Proposition~\ref{prop:agwc}) we have that $f$ is a PP if and only if
$$
(g(x))^{qs}-(g(x))^s+b\phi(x)
$$
permutes $J$.

Note that for any $y\in J$ we have $Tr^{q^2}_q(y)=0$ and thus $y^q=-y$. 
Since $s$ {is even},
for any $y\in J$ we have $y^{s} \in\mathbb{F}_{q}$. Indeed, $$y^{sq}=(-y)^{s}=y^{s}.$$
Then, since $g(J)\subseteq J$ we have that $$(g(x))^{qs}-(g(x))^s=0,$$
for any $x\in J$. Thus,
$f$ is a PP if and only if $\phi(x)$ permutes $J$.
\end{proof}

\begin{example}
An easy example of function $g$ such that $g(J)\subseteq J$ is given by $g(x)=x+\delta$ with $\delta\in J$.
\end{example}

Theorem~\ref{th:quad} can be generalized (with a similar proof)  to functions $f$ of type
$$
f(x)=b\phi(x)+\sum_{i}(g_i(x^q-x))^{s_i},
$$
where $s_i$'s {are even},
and $g_i$'s are such that $g_i(J)\subseteq J$.
\begin{corollary}
Let $p$ be a prime and $q=p^m$ for some integer $m>0$. Let $t$ be a positive integer.  Let $g_1,...,g_t\in\mathbb{F}_{q^2}[x]$ be such that $g_i(J)\subseteq J$ for all $1\le i\le t$, where $J=\{x^q-x: x \in \mathbb{F}_{q^{2}} \}$, and $\phi(x)$ an $\mathbb{F}_q$-linear polynomial over $\mathbb{F}_q$. Let $s_1,...,s_t$ {be even integers}.
Then, for any $b\in\mathbb{F}_q^*$
$$
f(x)=b\phi(x)+\sum_{i}(g_i(x^q-x))^{s_i},
$$
is a PP if and only if $\phi(x)$ induces a permutation over $J$.
\end{corollary}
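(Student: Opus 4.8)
The plan is to reduce to the single-term case via the AGW criterion exactly as in the proof of Theorem~\ref{th:quad}, the only genuinely new ingredient being the additivity of the map $\psi(x)=x^q-x$. First I would set $\psi(x)=x^q-x$, take $h\equiv b$, and put $g(y)=\sum_{i=1}^t g_i(y)^{s_i}$, so that $f(x)=b\,\phi(x)+g(x^q-x)=h\circ\psi(x)\,\phi(x)+g\circ\psi(x)$ has precisely the shape required by Proposition~\ref{prop:agwc}, applied over $\mathbb{F}_{q^2}$. By that proposition, $f$ is a PP if and only if $\ker(\phi)\cap\ker(\psi)=\{0\}$ and the map $b\,\phi(x)+\psi(g(x))$ permutes $J=\psi(\mathbb{F}_{q^2})$; as in Theorem~\ref{th:quad}, the whole argument then hinges on evaluating $\psi(g(x))$ on $J$.

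The heart of the proof is to show that $\psi(g(y))=0$ for every $y\in J$. Since $\psi$ is $\mathbb{F}_p$-additive (it is the $q$-power Frobenius minus the identity), I would write $\psi(g(y))=\sum_{i=1}^t\psi\!\left(g_i(y)^{s_i}\right)$ and treat each summand exactly as the single term was treated in Theorem~\ref{th:quad}. Fix $y\in J$; then $\mathrm{Tr}_q^{q^2}(y)=0$, hence $y^q=-y$, and by hypothesis $g_i(y)\in J$, so likewise $g_i(y)^q=-g_i(y)$. Writing $s_i=p^{h_i}+p^{k_i}$ and using that each Frobenius power commutes with taking additive inverses, $\left(g_i(y)^{s_i}\right)^q=\left(g_i(y)^q\right)^{s_i}=\left(-g_i(y)\right)^{p^{h_i}+p^{k_i}}=g_i(y)^{s_i}$, the two sign factors cancelling. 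Thus $\psi\!\left(g_i(y)^{s_i}\right)=\left(g_i(y)^{s_i}\right)^q-g_i(y)^{s_i}=0$ for each $i$, and summing gives $\psi(g(y))=0$ on all of $J$.

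With $\psi(g(x))$ identically zero on $J$, condition (ii) of Proposition~\ref{prop:agwc} collapses to the requirement that $b\,\phi(x)$ permute $J$; since $b\in\mathbb{F}_q^*$ and $J$ is an $\mathbb{F}_q$-subspace, this is equivalent to $\phi$ inducing a permutation of $J$, which is exactly the asserted condition. The remaining AGW hypothesis $\ker(\phi)\cap\ker(\psi)=\{0\}$ (with $\ker(\psi)=\mathbb{F}_q$) I would handle as in Theorem~\ref{th:quad}, either noting that it is subsumed by the permutation condition in the present set-up or checking it directly. I do not expect any real obstacle: once additivity lets me split the sum, every term is annihilated on $J$ by the identical $p$-weight-two computation, so the multi-term statement follows from the single-term one with essentially no extra work. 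The only point requiring a little care is the sign bookkeeping in $\left(-g_i(y)\right)^{p^{h_i}+p^{k_i}}$, which is immediate once one recalls that $z\mapsto z^{p^j}$ is additive and hence sends $-g_i(y)$ to $-g_i(y)^{p^j}$.
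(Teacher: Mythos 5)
Your proof is correct and follows essentially the same route as the paper: the paper proves the single-term case (Theorem~\ref{th:quad}) via the AGW criterion by showing $(g(y))^{qs}-(g(y))^{s}=0$ for all $y\in J$, then states this corollary as an immediate generalization ``with a similar proof,'' and your term-by-term sign computation combined with the additivity of $y\mapsto y^{q}-y$ is precisely that similar proof. Note that, exactly like the paper's own argument for Theorem~\ref{th:quad}, you leave condition (i) of Proposition~\ref{prop:agwc} (that $\ker(\phi)\cap\mathbb{F}_q=\{0\}$) essentially unaddressed, so your treatment is no less complete than the source.
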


\begin{remark}
Note that the polynomials in Theorem~\ref{prop:agw} and \ref{th:quad}, considering $\phi(x)=x$, are also CPPs when $b\ne 0, -1$.
\end{remark}

As for the case of the functions $f_1$ and $f_2$ of Theorem~\ref{prop:agw}, also the functions satisfying Theorem~\ref{th:quad} are PcN when $c\in\mathbb{F}_q\setminus\{1\}$.
\begin{theorem}\label{th:pcn2}
Let $p$ be a prime and $q=p^m$ for some integer $m>0$. Let $f(x)$ be a PP as in Theorem~\ref{th:quad}. Then $f(x)$ is PcN for any $c\in\mathbb{F}_q\setminus\{1\}$.
\end{theorem}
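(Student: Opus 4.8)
The plan is to follow the template of Theorem~\ref{th:pcn1}: show directly that for every $c\in\mathbb{F}_q\setminus\{1\}$ and every $\gamma\in\mathbb{F}_{q^2}$ the map $f(x+\gamma)-cf(x)$ is a permutation of $\mathbb{F}_{q^2}$, which is exactly the PcN condition (the zero direction being covered since $c\neq1$). Writing $\psi(x)=x^q-x$ and using that $\phi$ and $\psi$ are $\mathbb{F}_q$-linear, I would first expand
$$
f(x+\gamma)-cf(x)=b(1-c)\phi(x)+\big(g(\psi(x)+\psi(\gamma))\big)^s-c\big(g(\psi(x))\big)^s+b\phi(\gamma).
$$
The point is that this already has the AGW shape $h(\psi(x))\,\phi(x)+G(\psi(x))$ with the \emph{constant} $h\equiv b(1-c)\in\mathbb{F}_q^{*}$ (nonzero because $c\neq1$ and $b\neq0$) and $G(y)=\big(g(y+\psi(\gamma))\big)^s-c\big(g(y)\big)^s+b\phi(\gamma)$.

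Next I would invoke Proposition~\ref{prop:agwc}. Condition (i), namely $\ker(\phi)\cap\ker(\psi)=\{0\}$, involves only $\phi$ and $\psi$ and is therefore identical to the condition that already guarantees that $f$ itself is a PP in Theorem~\ref{th:quad}; so it holds by hypothesis. It remains to check condition (ii): that $b(1-c)\phi(x)+\psi(G(x))$ permutes $J=\{x^q-x:x\in\mathbb{F}_{q^2}\}$. Here I would reuse the key computation from the proof of Theorem~\ref{th:quad}: for $y\in J$ one has $y^q=-y$ and, since $s$ has $p$-weight $2$, $y^s\in\mathbb{F}_q$. As $J$ is an $\mathbb{F}_q$-subspace and $\psi(\gamma)\in J$, the shifted argument $y+\psi(\gamma)$ also lies in $J$, so both $g(y+\psi(\gamma))$ and $g(y)$ lie in $J$ (because $g(J)\subseteq J$), whence $(g(y+\psi(\gamma)))^s$ and $(g(y))^s$ lie in $\mathbb{F}_q$. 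Consequently $\psi$ annihilates these two terms, and $\psi(G(y))=\psi(b\phi(\gamma))=:\kappa$ is a constant lying in $J$.

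With this collapse, condition (ii) reduces to showing that the affine map $x\mapsto b(1-c)\phi(x)+\kappa$ permutes $J$. This is immediate: $\phi$ permutes $J$ by hypothesis, multiplication by $b(1-c)\in\mathbb{F}_q^{*}$ is an $\mathbb{F}_q$-linear bijection of the subspace $J$, and translation by the constant $\kappa\in J$ is a bijection of $J$, so the composite is a bijection of $J$. Hence $f(x+\gamma)-cf(x)$ is a PP for every $\gamma$, and $f$ is PcN. I expect the only delicate point to be the verification that the power-$s$ terms collapse under $\psi$ --- that is, transplanting the $p$-weight-$2$ argument to the shifted argument $y+\psi(\gamma)$ and checking that the leftover constant $\kappa$ genuinely lands in $J$, so that the final affine map stays inside $J$; everything else is a mechanical application of the AGW criterion.
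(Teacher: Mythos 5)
Your proposal is correct and follows essentially the same route as the paper: expand $f(x+\gamma)-cf(x)$ into the AGW shape with constant coefficient $b(1-c)\in\mathbb{F}_q^{*}$, observe that $g'(y)=g(y+\gamma^q-\gamma)$ still maps $J$ into $J$ because $J$ is an $\mathbb{F}_q$-subspace, and reuse the $p$-weight-$2$ collapse from Theorem~\ref{th:quad} to reduce condition (ii) to $\phi$ permuting $J$. The paper's proof is just a terser version of this (it silently drops the additive constant $b\phi(\gamma)$, which you instead track explicitly and correctly show lands in $J$ after applying $\psi$).
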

\begin{proof}
We have that
$$f(x+\gamma)-cf(x)=b(1-c)\phi(x)+(g'(x^q-x))^s-c(g(x^q-x))^s+b\phi(\gamma),$$
where $g'(x)=g(x+\gamma^q-\gamma)$. Note that since $J$ is an $\mathbb{F}_q$-vector space,  $g'(J)\subseteq J$.
Now as in Theorem~\ref{th:quad}, this is a permutation if and only if $\phi(x)$ permutes $J$. This condition is satisfied since $f$ is a PP.
\end{proof}

\begin{remark}
In even characteristic, PN functions (i.e. PcN function with $c=1$) do not exist. As pointed out in \cite{EFRST2020}, PcN functions, for $c\ne 1$, exist also for the case $p=2$. Indeed, trivially, any PP is PcN for $c=0$ and any linear permutation is PcN for any $c\ne 1$.   Theorems \ref{th:pcn1} and \ref{th:pcn2} provide non-trivial PcN functions for $p=2$.
\end{remark}

A similar argument can  be done for the case of APcN maps using the results of \cite{Mesn}. 
As for the PcN case we can obtain APcN maps for any $c\in \mathbb{F}_q\setminus\{1\}$ using functions as in Theorem~\ref{prop:agw2a1}. In particular, for $n$ odd, we can obtain the following APcN maps.
\begin{theorem}\label{th:apcnagw}
Let $n$ and $m$ be two positive integers with $n$ odd. Let $q=2^m$ and $\phi(x)$ be an $\ff{q}$-linear polynomial which is 2-to-1 over $\mathbb{F}_q$ and that permutes $J=\{x^q+x\,:\, x \in\mathbb{F}_{q^n}\}$.
Let $g\in\mathbb{F}_{q^n}[x]$ and $b\in\mathbb{F}_q^*$. Then,
$$f_1(x)=b\phi(x)+Tr_q^{q^n}(g(x^q+x))\text{ and }f_2(x)=b\phi(x)+g(x^q+x)^{(q^n-1)/(q-1)}$$
are APcN functions for any $c\in\mathbb{F}_q\setminus\{1\}$.
\end{theorem}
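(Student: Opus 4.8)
The plan is to prove, for each fixed $c\in\ff{q}\setminus\{1\}$ and each $\gamma\in\ff{q^n}$, that the $c$-derivative ${}_cD_\gamma f_i(x)=f_i(x+\gamma)-cf_i(x)$ is a $2$-to-$1$ map on $\ff{q^n}$. Since $q^n=2^{mn}$ is even, a $2$-to-$1$ map has every fibre of size $0$ or $2$, so this yields ${}_c\Delta_{f_i}(\gamma,b)\le 2$ for all $b$, hence ${}_c\Delta_{f_i}\le 2$, i.e. $f_i$ is APcN. (The pair $(\gamma,c)=(0,1)$ is excluded automatically since $c\ne 1$, so $\gamma=0$ needs no separate treatment.)

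First I would expand the derivative using that $\phi$ is $\ff{q}$-linear, that $(x+\gamma)^q+(x+\gamma)=(x^q+x)+(\gamma^q+\gamma)$, and that $-c=c$ in characteristic two. Writing $\delta=\gamma^q+\gamma$ one gets
$$
f_1(x+\gamma)-cf_1(x)=b(1+c)\phi(x)+\Theta_1(x^q+x)+b\phi(\gamma),\qquad \Theta_1(y)=Tr_q^{q^n}(g(y+\delta))+c\,Tr_q^{q^n}(g(y)),
$$
$$
f_2(x+\gamma)-cf_2(x)=b(1+c)\phi(x)+\Theta_2(x^q+x)+b\phi(\gamma),\qquad \Theta_2(y)=g(y+\delta)^{(q^n-1)/(q-1)}+c\,g(y)^{(q^n-1)/(q-1)}.
$$
The crucial observation is that $\Theta_1,\Theta_2$ take values in $\ff{q}$: the relative trace maps into $\ff{q}$, the exponent $(q^n-1)/(q-1)$ is the relative norm exponent so $g(\cdot)^{(q^n-1)/(q-1)}\in\ff{q}$, and $c\in\ff{q}$. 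Moreover $b(1+c)\in\ffs{q}$ because $b\ne 0$ and $c\ne 1$.

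Next I would invoke Theorem~\ref{prop:agw2a1}. The key point is that its proof runs the $2$-to-$1$ AGW criterion (the analogue of Proposition~\ref{prop:agwc}) with $\psi(x)=x^q+x$, and uses the additive term only through the fact that it is a function of $x^q+x$ valued in $\ff{q}$: any such term is annihilated by $\psi$, since $\psi(z)=z^q+z=0$ for $z\in\ff{q}$, so the AGW condition collapses to ``$h(x)\phi(x)$ permutes $J$''. Hence the conclusion of Theorem~\ref{prop:agw2a1} holds verbatim with $Tr_q^{q^n}(g(\cdot))$, resp. $g(\cdot)^{(q^n-1)/(q-1)}$, replaced by the arbitrary $\ff{q}$-valued function $\Theta_i(x^q+x)$. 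Applying this with $h\equiv b(1+c)\in\ffs{q}$ and the same $\phi$ — which is $2$-to-$1$ over $\ff{q}$ by hypothesis, and for which $b(1+c)\phi$ still permutes the $\ff{q}$-space $J$, being a nonzero scalar multiple of a permutation of $J$ — shows that $b(1+c)\phi(x)+\Theta_i(x^q+x)$ is $2$-to-$1$. Adding the constant $b\phi(\gamma)$ preserves this, so ${}_cD_\gamma f_i$ is $2$-to-$1$ for every $\gamma$, as required.

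The only genuinely delicate step is this mild generalization of Theorem~\ref{prop:agw2a1} to an arbitrary $\ff{q}$-valued additive term: one must verify that nothing in the $2$-to-$1$ reduction used the specific trace/norm shape beyond membership in $\ff{q}$. For $f_1$ the combination even simplifies to a single trace $Tr_q^{q^n}\!\bigl(g(y+\delta)+cg(y)\bigr)$, which brings it literally into the form of Theorem~\ref{prop:agw2a1}; for $f_2$ the sum of two norms does not collapse, and the generalization is really needed. Finally, I would note that the standing hypothesis $n$ odd is exactly what makes the assumptions on $\phi$ consistent: since $Tr_q^{q^n}(z)=nz$ for $z\in\ff{q}$, one has $\ff{q}\cap J=\{0\}$ precisely when $n$ is odd, which is necessary for a map that is $2$-to-$1$ over $\ff{q}$ to simultaneously permute $J$; this guarantees the construction is non-vacuous.
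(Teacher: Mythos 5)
Your proof is correct and follows essentially the same route as the paper: expand the $c$-derivative, observe it has the shape $b(1+c)\phi(x)+\Theta(x^q+x)+\text{const}$ with $b(1+c)\in\mathbb{F}_q^*$ and $\Theta$ valued in $\mathbb{F}_q$, and conclude $2$-to-$1$-ness via Theorem~\ref{prop:agw2a1}. If anything you are more careful than the paper, which disposes of $f_2$ with ``the claim follows in a similar way'': you correctly note that for $f_2$ the two norm terms do not collapse into a single one, so one genuinely needs the (easy) observation that the proof of Theorem~\ref{prop:agw2a1} only uses that the additive term is an $\mathbb{F}_q$-valued function of $x^q+x$.
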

\begin{proof}
Let us consider $f_1(x)$. For any $\gamma$ we have
$$
\begin{aligned}
f_1(x+\gamma)+cf_1(x)=&b\phi(x)+b\phi(\gamma)+Tr_q^{q^n}(g(x^q+x+\gamma^q+\gamma))\\
&+cb\phi(x)+Tr_q^{q^n}(cg(x^q+x))\\
=& b(c+1)\phi(x)+Tr_q^{q^n}(g'(x^q+x))+b\phi(\gamma),
\end{aligned}
$$
where $g'(x)=g(x+\gamma^q+\gamma)+cg(x)$. 
Then, $f_1(x+\gamma)+cf_1(x)$ is 2-to-1 from Theorem~\ref{prop:agw2a1}.

For $f_2$ the claim follows in a similar way.
\end{proof}

\begin{example}
For constructing APcN functions as in Theorem~\ref{th:apcnagw}, we can consider, for example, the 2-to-1 function $\phi$ over $\ff{q}$ defined by $\phi(x)=x^{2^i}+x$ with $\gcd(i,m)=1$.

Indeed, since $\gcd(i,m)=1$ we have that $\ker(\phi)=\mathbb{F}_2$, implying that $\phi$ is 2-to-1 over $\mathbb{F}_q$. 
Moreover $\phi$ permutes $J$. Suppose that there exist $x_1,x_2\in J$ such that $\phi(x_1)=\phi(x_2)$ then $\phi(x_1+x_2)=0$. Since $J$ is a vector subspace, we have $x_1+x_2\in J\cap \ker(\phi)=\{0\}$, recall that $n$ is odd and $Tr_q^{q^n}(1)=1$.
\end{example}

\begin{remark}
Note that, when $n$ is even, it is not possible to construct  $\phi$ that is a 2-to-1 map over $\mathbb{F}_q$ and permutes $J$ since $\mathbb{F}_q\subseteq J$. Indeed $\mathbb{F}_{q^2}$ is a subfield of $\mathbb{F}_{q^n}$ and, denoting by $\psi(x)=x^q+x$, we have $\psi(\mathbb{F}_{q^2})=\mathbb{F}_q$. 

So, for $n$ even, it is not possible to construct APcN functions as in Theorem~\ref{th:apcnagw}.
\end{remark}

\section{Non-existence results for APcN and PcN monomials}
\label{S:1}

In this section we provide non-existence results for exceptional APcN (and PcN) monomials. In what follows, we will consider exponents  $d$ such that $p\nmid d(d-1)$, and we denote $p^h$ by $q$, for some integer $h$, and by $s$ the smallest positive integer such that $d-1 \mid (p^s-1)$.

Let us consider $f(x)=x^d$ defined over $\ff{q}$. The monomial $f(x)$ is APcN, $c\neq 1$,  if and only if
\begin{equation}\label{Condizione}
\forall a,b \in \mathbb{F}_q \Longrightarrow (x+a)^d-cx^d =b \textrm{ has at most two solutions.}
\end{equation}
For $a=0$, the condition above implies that $x^d$ is at most a 2-to-1 function. That is ${\gcd}(d,q-1)\leq 2$. 

When $a\neq 0$, Condition \eqref{Condizione} can be simplified to 
\begin{equation}\label{Condizione2}
\forall b \in \mathbb{F}_q \Longrightarrow (x+1)^d-cx^d =b \textrm{ has at most two solutions.}
\end{equation}

A standard tool, when dealing with APN or PN functions is to consider the curve  $\mathcal{C}_{f,c}$ of affine equation 
\begin{equation}\label{Eq:C_f}
\mathcal{C}_{f,c} \ : \ \frac{(X+1)^d-(Y+1)^d-c (X^d-Y^d)}{X-Y}=0.
\end{equation}

We refer to \cite{BT2019} for and the references therein for an introduction to basic concepts about curves over finite fields. 

{Note that Condition \eqref{Condizione2} implies the existence of at most $q/2$ values $b_i$ for which $(x+1)^d-cx^d =b_i$ has two solutions. 
Therefore, there are at most  $q/2$ pairs $\{x_i,y_i\}$, $x_i\neq y_i$, $x_i,y_i\in \mathbb{F}_q$, such that  $x_i$ and $y_i$ satisfy $(x_i+1)^d-cx_i^d =b_i=(y_i+1)^d-cy_i^d$. Thus, $\mathcal{C}_{f,c}$ possesses at most $q$ $\mathbb{F}_q$-rational points. If $q$ is large enough with respect to $d$, the existence of more than one absolutely irreducible component of $\mathcal{C}_{f,c}$ defined over $\mathbb{F}_q$ would imply, by Hasse-Weil bound, the existence of roughly $2q$ $\mathbb{F}_q$-rational points, a contradiction.}

First, we will provide sufficient conditions on $c$ and $d$ for which $\mathcal{C}_{f,c}$ is absolutely irreducible. In particular, we provide upper bounds on the number of singular points of $\mathcal{C}_{f,c}$. To this end we will consider, for simplicity, the curve $\mathcal{D}_{f,c} : (X+1)^d-(Y+1)^d-c (X^d-Y^d)=0$. Singular points of  $\mathcal{C}_{f,c}$ are a subset of the singular points of $\mathcal{D}_{f,c}$.

\begin{theorem}\label{Th:NoSingPoints}
{Let $\xi\in \overline{\mathbb{F}_q}$ be a primitive $(d-1)$-root of unity. Suppose that  \begin{equation}\label{Eq:Th}
    \nexists \ \ i,j, k\in\{0,\dots,d-2\},  i\ne 0, \ \textrm{such that} \ \sqrt[d-1]{c}\ne \frac{1-\xi^i}{\xi^k-\xi^j}.
\end{equation} Then, $\mathcal{D}_{f,c}$ contains no singular points off $X=Y$. In particular, this is true if $\sqrt[d-1]{c}\notin \mathbb{F}_{p^s}$.}
\end{theorem}
\proof
Since $p\nmid d$, $\mathcal{D}_{f,c}$ does not possess singular points at infinity. Note that there are no singular points lying on $X=0$ or $Y=0$. 
Affine singular points $(x_0,y_0)$, $x_0\neq y_0$, satisfy 
\begin{equation}\label{Eq:SingularPoints}
    \left\{
    \begin{array}{l}
    (\frac{x_0+1}{x_0})^{d-1}=c\\
    (\frac{y_0+1}{y_0})^{d-1}=c\\
    (\frac{x_0}{y_0})^{d-1}=1\\
    \end{array}
    \right..
\end{equation}
Let $\xi\in \overline{\mathbb{F}_q}$ be a primitive $(d-1)$-root of unity and denote by $c_0=\sqrt[d-1]{c}$.  Therefore, $y_0=\xi^i x_0$, $y_0=1/(c_0\xi^{j}-1)$, $x_0=1/(c_0\xi^{k}-1)$, for some $i,j,k \in \{0,\ldots,d-2\}$ and $i\neq 0$. Each triple $(i,j,k)$ provides a pair $(x_0,y_0)$ satisfying \eqref{Eq:SingularPoints}. Thus,
\begin{equation}\label{Eq:c_0}
    c_0\xi^{k}-1=\xi^i (c_0\xi^{j}-1).
\end{equation}
By our hypothesis $\xi \in \mathbb{F}_{p^s}$. Equation \eqref{Eq:c_0} yields
$$c_0(\xi^{k}-\xi^{i+j})=1-\xi^i.
$$
Since $i\neq 0$, 
{we have} a contradiction. 
So, no pairs $(x_0,y_0)$ satisfy \eqref{Eq:SingularPoints} and there are no singular points. 
\endproof

Note that, under the hypothesis of Theorem~\ref{Th:NoSingPoints}  the number of singular points of $\mathcal{C}_{f,c}$ is at most $d/2$. A deeper analysis shows that 
\begin{eqnarray*}
(X+1+a)^d-(Y+1+a)^d-c((X+a)^d-(Y+a)^d)\\
=d[(a+1)^{d-1}-ca^{d-1}](X-Y)+\binom{d}{2}[(a+1)^{d-2}-ca^{d-2}](X^2-Y^2)+\cdots
\end{eqnarray*}
and therefore points $(a,a)$ are double points of $\mathcal{D}_{f,c}$ and then simple points of $\mathcal{C}_{f,c}$. So, $\mathcal{C}_{f,c}$ possesses no singular points and hence it is absolutely irreducible.

\begin{theorem}\label{Th:AbsolutelyIrreducible}
 Suppose that {$c$ satisfies Condition \eqref{Eq:Th}}.
 Then, $\mathcal{C}_{f,c}$ is absolutely irreducible.
\end{theorem}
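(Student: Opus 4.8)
The plan is to show that $\mathcal{C}_{f,c}$ is a \emph{smooth} (nonsingular) projective plane curve and then to invoke the classical fact that a smooth plane curve is absolutely irreducible. Almost all of the geometric work has already been carried out in the discussion preceding the statement; what remains is to assemble it into a clean argument and, crucially, to dispose of the points lying on the diagonal $X=Y$.

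First I would record that every singular point of $\mathcal{C}_{f,c}$ is a singular point of $\mathcal{D}_{f,c}$. Writing $D=(X-Y)\,C$ for the defining polynomials (homogenising if one wishes to treat the line at infinity on the same footing, which is legitimate since the degrees add), the product rule gives $D_X=C+(X-Y)C_X$, $D_Y=-C+(X-Y)C_Y$ and $D_Z=(X-Y)C_Z$, so any common zero of $C,C_X,C_Y,C_Z$ is automatically a common zero of $D,D_X,D_Y,D_Z$. Combining this with Theorem~\ref{Th:NoSingPoints}, together with the observation used there that $p\nmid d$ rules out singularities at infinity, forces every singular point of $\mathcal{C}_{f,c}$ to lie on the affine diagonal $X=Y$.

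It then remains to check that no point $(a,a)$ is singular on $\mathcal{C}_{f,c}$. Here I would use the local expansion displayed before the statement: after translating $(a,a)$ to the origin, the polynomial of $\mathcal{D}_{f,c}$ equals $d[(a+1)^{d-1}-ca^{d-1}](X-Y)+\binom{d}{2}[(a+1)^{d-2}-ca^{d-2}](X-Y)(X+Y)+\cdots$, and dividing out the factor $X-Y$ shows that the local expansion of $\mathcal{C}_{f,c}$ at $(a,a)$ begins with the constant $d[(a+1)^{d-1}-ca^{d-1}]$ followed by $\binom{d}{2}[(a+1)^{d-2}-ca^{d-2}](X+Y)$. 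Thus $(a,a)\in\mathcal{C}_{f,c}$ forces $(a+1)^{d-1}=ca^{d-1}$, and for such a point to be singular we would additionally need $(a+1)^{d-2}=ca^{d-2}$. Now $a\neq 0$ (else $1=0$) and $a\neq -1$ (else $c=0$, which is excluded because $\sqrt[d-1]{c}\notin\mathbb{F}_{p^s}$ already forces $c\neq 0$), while $p\nmid d(d-1)$ forces $p$ odd and hence $p\nmid\binom{d}{2}$. Dividing the two relations then yields $a+1=a$, a contradiction. Hence $(a,a)$ is a simple point and $\mathcal{C}_{f,c}$ is smooth everywhere.

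Finally I would conclude by the standard Bézout argument: if a smooth projective plane curve split over $\overline{\mathbb{F}_q}$ as a union of two distinct components, those components would intersect by Bézout's theorem, and any intersection point would be a singular point of the curve; a repeated component would likewise produce a whole singular locus. Since $\mathcal{C}_{f,c}$ has no singular points at all, it must be absolutely irreducible. The only genuinely delicate step is the local analysis on the diagonal, where one must keep careful track of which relations hold at a candidate singular point and verify the exclusions $a\neq 0,-1$ and the nonvanishing of $\binom{d}{2}$; the transfer of singularities from $\mathcal{C}_{f,c}$ to $\mathcal{D}_{f,c}$ and the final irreducibility criterion are formal.
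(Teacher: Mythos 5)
Your proof is correct and follows essentially the same route as the paper: combine Theorem~\ref{Th:NoSingPoints} with the local expansion at the diagonal points $(a,a)$ to conclude that $\mathcal{C}_{f,c}$ is nonsingular, and then deduce absolute irreducibility from smoothness. You in fact supply details the paper leaves implicit (the product-rule transfer of singularities from $\mathcal{C}_{f,c}$ to $\mathcal{D}_{f,c}$, and the verification that $a\neq 0,-1$ and $p\nmid\binom{d}{2}$ make the diagonal points simple), which is a welcome tightening rather than a deviation.
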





We want to prove that if $q$ is large enough there exists $t_0\in \mathbb{F}_{q}$ such that the equation $(x+1)^d-cx^d=t_0$ has more than two solutions, i.e. $x^d$ is not exceptional PcN nor APcN. To this end we will investigate the geometric and the algebraic Galois groups of the polynomial $F_{c,d}(t,x)=(x+1)^d-cx^d-t$. 

More in details, consider $G^{arith}_{c,d}=\textrm{Gal}(F_{c,d}(t,x):\mathbb{F}_q(t))$ and $G^{geom}_{c,d}=\textrm{Gal}(F_{c,d}(t,x):\overline{\mathbb{F}}_q(t))$. They are both subgroups of $\mathcal{S}_d$, the symmetric group over $d$ elements. Our aim is to prove that $G^{geom}_{c,d}=\mathcal{S}_d$. This would force that $G^{geom}_{c,d}=\mathcal{S}_d=G^{arith}_{c,d}$, since $G^{geom}_{c,d}\leq G^{arith}_{c,d}$ and therefore by  Chebotarev density Theorem~\cite{kosters2014short},  one obtains the existence of  a specialization $t_0\in \mathbb{F}_q$ for which  $F_{c,d}(t_0,x)$ splits into $d$ pairwise distinct  linear factors $(x-x_i)$ defined over $\mathbb{F}_q$ and therefore $(x+1)^d-cx^d$ cannot be a permutation or 2-to-1 and $x^d$ is not PcN nor APcN.

\begin{lemma}\label{Lemma:GaloisGroup}
{Let  $c$ satisfy Condition \eqref{Eq:Th}}.
The geometric Galois group $G^{geom}_{c,d}$ coincides with $\mathcal{S}_d$.
\end{lemma}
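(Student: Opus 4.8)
The plan is to read $F_{c,d}(t,x)=(x+1)^d-cx^d-t$ as the degree-$d$ cover $\varphi\colon \mathbb{P}^1_x\to\mathbb{P}^1_t$ given by $x\mapsto (x+1)^d-cx^d=t$, so that $G^{geom}_{c,d}$ is precisely the monodromy (geometric Galois) group of $\varphi$, a transitive subgroup of $\mathcal{S}_d$. Transitivity is immediate: $\varphi(x)-t$ is linear in $t$ with coprime coefficients, hence irreducible in $\overline{\mathbb{F}}_q[x,t]$ and, by Gauss, in $\overline{\mathbb{F}}_q(t)[x]$. I would also record at the outset that $p\nmid d(d-1)$ forces $p$ to be odd (the consecutive integers $d,d-1$ are never both odd), so every ramification index appearing below is prime to $p$ and the cover is \emph{tame}; this is exactly what licenses the use of inertia generators and the product-one relation of the tame fundamental group.

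The heart of the matter is the ramification analysis. Since $c\neq 1$, the leading coefficient $1-c$ of $\varphi$ is nonzero, so $\varphi$ has a pole of order $d$ at $x=\infty$: the place at infinity is totally ramified and contributes a $d$-cycle $\sigma_\infty$ to $G^{geom}_{c,d}$. For the finite ramification I solve $\varphi'(x)=d\bigl((x+1)^{d-1}-cx^{d-1}\bigr)=0$; its leading coefficient $d(1-c)$ is nonzero, so $\deg\varphi'=d-1$. Writing $c_0=\sqrt[d-1]{c}$ and letting $\xi$ be a primitive $(d-1)$-th root of unity, the critical points are $x_k=1/(c_0\xi^k-1)$ for $k=0,\dots,d-2$. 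These are $d-1$ distinct finite points (finite because $c_0\xi^k=1$ would force $c=1$, distinct because the $\xi^k$ are distinct), hence simple zeros of $\varphi'$, i.e.\ ramification points of index exactly $2$ each contributing a transposition. A direct computation gives the critical values $\varphi(x_k)=c/(c_0\xi^k-1)^{d-1}$, and the very argument of Theorem~\ref{Th:NoSingPoints}, via \eqref{Eq:c_0} and the hypothesis $\sqrt[d-1]{c}\notin\mathbb{F}_{p^s}$, shows these are pairwise distinct; so there are exactly $d-1$ distinct finite branch points, each carrying a single transposition.

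To conclude I would invoke the product-one relation of the tame fundamental group of $\mathbb{P}^1$ minus the branch locus: the inertia generators multiply to the identity, so after fixing base points the product of the $d-1$ transpositions equals $\sigma_\infty^{-1}$, a $d$-cycle. A factorization of a $d$-cycle into $d-1$ transpositions is minimal, and by the classical fact that minimal factorizations of a full cycle correspond to spanning trees, the graph on the $d$ sheets whose edges are these transpositions is connected. Transpositions whose graph is connected generate the full symmetric group, whence $G^{geom}_{c,d}=\mathcal{S}_d$. Alternatively one can finish through the group theory in the excerpt: establish that $G^{geom}_{c,d}$ is primitive and then apply Theorem~\ref{thm:hemultsn} with the prime $s=2$ and $n=d\geq 5$ to get $G^{geom}_{c,d}\supseteq\mathcal{A}_d$, the odd transposition then upgrading this to $\mathcal{S}_d$ (the cases $d\le 4$ being checked directly).

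The step I expect to be the main obstacle is making the ramification-to-monodromy dictionary rigorous in positive characteristic: one must justify that the inertia generator at each branch point has exactly the claimed cycle type and that the product-one relation is available, which is where tameness (and hence $p$ odd) is essential. If one instead routes through Theorem~\ref{thm:hemultsn}, the delicate point shifts to the proof of primitivity, and it is precisely there that the distinctness of the $d-1$ branch points—guaranteed by $\sqrt[d-1]{c}\notin\mathbb{F}_{p^s}$ through Theorem~\ref{Th:NoSingPoints}—does the essential work.
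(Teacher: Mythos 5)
Your proposal is correct, but it reaches $\mathcal{S}_d$ by a genuinely different route from the paper. The paper first proves primitivity of $G^{geom}_{c,d}$ via L\"uroth's theorem and the indecomposability of $f_{c,d}$ (which rests on the absolute irreducibility of $\mathcal{C}_{f,c}$, Theorem~\ref{Th:AbsolutelyIrreducible}), then produces a single transposition from a specialization $t_0$ admitting exactly one double root (via Lemma~\ref{orbits}), and concludes with Wielandt's Theorem~\ref{thm:hemultsn} applied with $s=2$; this is precisely your fallback, and you rightly note that Theorem~\ref{thm:hemultsn} needs $d\geq 5$, a point the paper glosses over (for $d\le 4$ one can invoke Jordan's classical ``primitive plus transposition'' criterion or check directly). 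Your primary argument instead computes the entire branch locus --- one tame, totally ramified place at infinity contributing a $d$-cycle, and $d-1$ distinct finite branch points each contributing a transposition, the distinctness of the critical values $c x_k^{d-1}$ being exactly where the hypothesis $\sqrt[d-1]{c}\notin\mathbb{F}_{p^s}$ enters through \eqref{Eq:c_0}, the same computation the paper performs when showing each $t_0$ has at most one repeated root --- and then uses the product-one relation of the tame fundamental group to see that the $d-1$ transpositions multiply to a $d$-cycle, hence generate a transitive group, hence all of $\mathcal{S}_d$. (Riemann--Hurwitz confirms you have found all the ramification; and connectivity of the transposition graph already follows from transitivity of the product, so the spanning-tree correspondence is not needed.) What you buy is a proof that bypasses primitivity, Wielandt's theorem, and the curve $\mathcal{C}_{f,c}$ altogether; what you pay is reliance on Grothendieck's description of the tame fundamental group of $\mathbb{P}^1$ minus a finite set in positive characteristic --- a correct but considerably heavier input than anything the paper uses --- together with the need to control \emph{all} the ramification (your observation that $p\nmid d(d-1)$ forces $p$ odd, hence tameness everywhere, is exactly the right justification). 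The paper's route needs only one well-behaved branch point plus primitivity, and is therefore more robust in situations where the full branch locus is inaccessible.
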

\proof
First we prove that the geometric Galois group of  $F_{c,d}(t,x)=(x+1)^d-cx^d-t\in \mathbb{F}_q[x]$ is primitive (i.e. it does not act on a nontrivial partition of the underlying set). 
Let $M$ be the splitting field of $F_{c,d}(t,x)$ and $G$ be the Galois group of $F_{c,d}(t,x)$ over $\mathbb{F}_q(t)$. Let $x$ be a root of $F_{c,d}(t,x)$ and consider the extension $\mathbb{F}_q(x):\mathbb F_q(t)$. Clearly, $t=(x+1)^d-cx^d=f_{c,d}(x)$ by definition. As a consequence of L\"uroth's Theorem, $f$ is indecomposable (i.e. it cannot be written as a composition of two non-linear polynomials) if and only if $G$ is a primitive group; see \cite[Proposition 3.4]{Fried}. 

To this end, suppose that $f_{c,d}(x)=h_1(h_2(x))$, for some $h_1(x),h_2(x)\in \overline{\mathbb{F}_q}[x]$, with $\deg(h_1(x)),\deg(h_2(x)) \in [2,\ldots,d/2]$. Then 
$$(h_2(X)-h_2(Y)) \mid (f_{c,d}(X)-f_{c,d}(Y))=(h_1(h_2(X))-h_1(h_2(Y))).$$
By Theorem~\ref{Th:AbsolutelyIrreducible}, $\mathcal{C}_{f,c}$ is absolutely irreducible and then $h_2(X)-h_2(Y)=X-Y$, which contradicts $\deg(h_2(x))>1$. Therefore $\textrm{Gal}(F_{c,d}(t,x): \overline{\mathbb F}_q(t))$ is  primitive.

Now we prove that there exists $t_0 \in \overline{\mathbb{F}_q}$ such that $(x+1)^d-cx^d=t_0$ has exactly $d-1$ roots in $\overline{\mathbb{F}_q}$. Elements $t_0\in \overline{\mathbb{F}_q}$ for which $(x+1)^d-cx^d=t_0$ has a repeated root $x_0$ are such that 
$$(x_0+1)^{d-1}-cx_0^{d-1}=0, \qquad t_0=c x_0^{d-1}.$$
Suppose that there exists another repeated root $y_0\neq x_0$ of $(x+1)^d-cx^d=t_0$. Then 
$$
\left\{
\begin{array}{l}
(x_0+1)^{d-1}-cx_0^{d-1}=0\\ 
x_0^{d-1}=y_0^{d-1}\\
(y_0+1)^{d-1}-cy_0^{d-1}=0.
\end{array}
\right.$$
which is equivalent to \eqref{Eq:SingularPoints}. So each $t_0$ has at most one repeated root. Note that a repeated root $x_0$ is at most a double root of $(x+1)^d-cx^d=t_0$ since otherwise $(x_0+1)^{d-2}=cx_0^{d-2}$ and a contradiction easily arises from $(x_0+1)^{d-1}=cx_0^{d-1}$.  Therefore each root of $(x+1)^{d-1}-cx^{d-1}$ (they are pairwise distinct) provides a $t_0=(x_0+1)^d-cx_0^d$ such that the equation $(x+1)^d-cx^d=t_0$ has exactly $d-1$ roots in $\overline{\mathbb{F}_q}$.

Let $r$ be such that the element $t_0$ obtained above belongs to $\mathbb F_{q^r}$. This means   that $(x+1)^{d}-cx^{d}-t_0$ has exactly one factor of multiplicity 
$2$ and all the others of multiplicity $1$. Let now $M$ be the splitting field of 
$F_{c,d}(t,x)$ over $\mathbb F_{q^r}(t)$. Let $R$ be a place of $M$ lying above $t_0$.
Now, using Lemma \ref{orbits} we obtain that the decomposition group $D(R\mid t_0)$ has a cycle of order exactly $2$ and fixes all the other elements of $H=\mathrm{Hom}_{\mathbb F_q(t)}(\mathbb F_q(x),M)$, where $x$ is a root in $\overline{\mathbb F_q(t)}$ of $F_{c,d}(t,X)$ ($H$ can be simply thought as the set of roots of $F_{c,d}(t,X)$ in $\overline{\mathbb F_q(t)}$). Now pick any element $g\in D(R\mid t_0)$ that acts non-trivially on $H$. This element has to be a transposition, which in turn forces $\textrm{Gal}(F_{c,d}(t,x):\mathbb F_{q^{ru}}(t))$ to contain a transposition for any $u\in \mathbb N$ and therefore in particular that $\textrm{Gal}(F_{c,d}(t,x): \overline{\mathbb F}_q(t))$ contains a transposition.

We already know that $\textrm{Gal}(F_{c,d}(t,x): \overline{\mathbb F}_q(t))$ is  primitive. Now using Theorem~\ref{thm:hemultsn} with $s=2$ we conclude that both $\mathcal{S}_d=\textrm{Gal}(F_{c,d}(t,x): \overline{\mathbb F}_q(t))$ and $\textrm{Gal}(F_{c,d}(t,x): \mathbb F_q(t))=\mathcal S_d$.
\endproof

\begin{theorem}\label{Th:NoPcN_APcN}
{Let $c$  satisfy Condition \eqref{Eq:Th}}.
Then $x^d$ is not exceptional PcN nor APcN.
\end{theorem}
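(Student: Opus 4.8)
The plan is to combine the determination of the Galois groups in Lemma~\ref{Lemma:GaloisGroup} with the existence of totally split places from Theorem~\ref{thm:existence_tot_split}, applied uniformly along the tower of constant field extensions $\mathbb{F}_{q^r}$. First I would record that Lemma~\ref{Lemma:GaloisGroup} yields $G^{geom}_{c,d}=\mathcal{S}_d$, and since $G^{geom}_{c,d}\le G^{arith}_{c,d}\le \mathcal{S}_d$ this forces $G^{arith}_{c,d}=\mathcal{S}_d$ as well. Let $F=\mathbb{F}_q(t)$ and $L=\mathbb{F}_q(x)$ with $t=(x+1)^d-cx^d$, so that $[L:F]=d$ and $F_{c,d}(t,x)$ is the minimal polynomial of $x$ over $F$; let $M$ be the Galois closure. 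The equality $G^{arith}_{c,d}=G^{geom}_{c,d}$ is precisely the statement that $M$ undergoes no constant field extension over $F$, i.e. the field of constants of $M$ is $\mathbb{F}_q$.

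Next I would pass to the constant extensions. For each $r$ set $F_r=\mathbb{F}_{q^r}(t)$, $L_r=\mathbb{F}_{q^r}(x)$ and $M_r=M\cdot\mathbb{F}_{q^r}$. Because the geometric Galois group is unchanged by extending constants and is already all of $\mathcal{S}_d$, the sandwich $\mathcal{S}_d=G^{geom}_{c,d}\le \textrm{Gal}(F_{c,d}(t,x):F_r)\le \mathcal{S}_d$ shows that the arithmetic group over $F_r$ is again $\mathcal{S}_d$ for every $r$; equivalently the field of constants of $M_r$ is $\mathbb{F}_{q^r}$. Crucially, the genus of $M_r$ equals the genus of $M$ (genus is invariant under constant field extension) and $[L_r:F_r]=d$, so the constant $C$ produced by Theorem~\ref{thm:existence_tot_split}, which depends only on these two data, is the same for all $r$.

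It then suffices to take $r$ large enough that $q^r>C$. Theorem~\ref{thm:existence_tot_split} provides a totally split place of $L_r:F_r$, which must be a rational place $t_0\in\mathbb{F}_{q^r}$: the place at infinity is totally ramified since the leading coefficient $1-c$ of $(x+1)^d-cx^d$ is nonzero (here $c\ne1$ enters), so it cannot split. Over $t_0$ the polynomial $F_{c,d}(t_0,x)$ factors into $d$ distinct linear factors over $\mathbb{F}_{q^r}$, hence $(x+1)^d-cx^d=t_0$ has $d\ge 3$ solutions in $\mathbb{F}_{q^r}$. Thus $_c\Delta_{x^d}(1,t_0)\ge d>2$, so $x^d$ fails to be APcN over $\mathbb{F}_{q^r}$, and a fortiori fails to be PcN. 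As this holds for every sufficiently large $r$, the monomial $x^d$ is APcN, and hence PcN, over only finitely many $\mathbb{F}_{q^r}$, which is exactly the assertion that it is neither exceptional APcN nor exceptional PcN.

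The hard part is the uniformity in $r$: one must guarantee that the bound $C$ in Theorem~\ref{thm:existence_tot_split} does not deteriorate as the constant field grows, for otherwise the totally split place could fail to exist for the large $r$ we need. This is precisely what the equality $G^{arith}_{c,d}=G^{geom}_{c,d}$ buys us, since it freezes both the degree $d$ of $L_r:F_r$ and the genus of the Galois closure $M_r$ along the whole tower. A secondary point requiring care is the faithful translation of ``totally split place'' into $d$ genuinely distinct $\mathbb{F}_{q^r}$-rational roots, rather than roots absorbed by ramification at infinity, which is handled by the observation $1-c\ne0$ above.
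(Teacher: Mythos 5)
Your proof is correct and follows essentially the same route as the paper: Lemma~\ref{Lemma:GaloisGroup} gives $G^{geom}_{c,d}=G^{arith}_{c,d}=\mathcal{S}_d$, hence a trivial field of constants for the Galois closure, and Theorem~\ref{thm:existence_tot_split} then yields a totally split specialization $t_0$ at which $(x+1)^d-cx^d=t_0$ has $d>2$ distinct roots. The only difference is that you make explicit two points the paper leaves implicit --- the uniformity of the constant $C$ along the tower $\mathbb{F}_{q^r}$ (via invariance of the genus and of the degree under constant field extension) and the fact that the totally split place is a finite rational place rather than the totally ramified place at infinity --- which is a welcome tightening but not a change of method.
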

\proof
Consider  $F=\mathbb F_q(t)$ and $L=F(z)$, where $z$ is a root of $F_{c,d}(t,x)\mid \overline{\mathbb F}_q(t)$. 
Lemma \ref{Lemma:GaloisGroup} tells us that  the field of constants of the Galois closure of $L:F$ is trivial, as the geometric Galois group of $F_{c,d}(t,x)$ is equal to the arithmetic one. Let $C$ be the constant in Theorem~\ref{thm:existence_tot_split}. Using now Theorem~\ref{thm:existence_tot_split} we have that if $q>C$  there exists a specialization $t_0\in  \mathbb F_q$ such that $F_{c,d}(t,x)$ is totally split and therefore $f_{c,d}(x)=t_0$ has $d$ solutions in $\mathbb{F}_q$. The claim follows.
\endproof

Finally, we list a couple of  open problems. 
\begin{open}
Non-existence results for PN or APN functions have been obtained using a number of different methods. It would be interesting to check whether such methods apply also to PcN and APcN for $c\neq 1$.
\end{open}

\begin{open}
If $p=2$, as already mentioned, no PN functions exist.  A different  definition of planar functions was given by Zhou~\cite{Zho2013}: a function  $f:\mathbb{F}_q\to\mathbb{F}_q$ is \emph{pseudo-planar} if, for each nonzero $\epsilon\in\mathbb{F}_q$, the function
\begin{equation}\label{Eq:Planar_even_widehat}
x\mapsto \widehat {f}_{\epsilon}(x):=f(x+\epsilon)+f(x)+\epsilon x
\end{equation}
is a permutation of $\mathbb{F}_q$. As shown by Zhou~\cite{Zho2013} and Schmidt and Zhou~\cite{schmidt_planar_2014},  pseudo-planar functions have similar properties and applications as their counterparts in odd characteristic. It is natural to extend such a definition to different $c$.  We call a function $f(x)$ {\em pseudo-$PcN$} if for all $c,\epsilon \in \mathbb{F}_q$, $\epsilon \neq 0$, $$f(x+\epsilon)+c f(x)+\epsilon x$$
 is a permutation of $\mathbb{F}_q$. Can these functions have the same applications as ``normal" PcN or APcN? 
\end{open}

\section*{Acknowledgment}

The research of D. Bartoli was supported by the Italian National Group for Algebraic and Geometric Structures and their Applications (GNSAGA - INdAM). 
The research of M. Calderini was supported by Trond Mohn Foundation.






\bibliographystyle{elsarticle-num-names}



\end{document}